\DeclareMathAlphabet{\mathcalligra}{T1}{calligra}{m}{n}
\DeclareMathAlphabet{\mathpzc}{OT1}{pzc}{m}{it}
\newtheorem{theoremABC}{Theorem}
\newtheorem{theorem}{Theorem}[section]
\newtheorem{corollary}[theorem]{Corollary}
\newtheorem{lemma}[theorem]{Lemma}
\newtheorem{proposition}[theorem]{Proposition}
\theoremstyle{definition}
\newtheorem{definition}[theorem]{Definition}
\newtheorem{remark}[theorem]{Remark}
\theoremstyle{remark}
\newcommand{\N}{{\mathbb{N}}}
\newcommand{\R}{{\mathbb{R}}}
\renewcommand{\SS}{{\mathbb{S}}}
\newcommand{\Z}{{\mathbb{Z}}}
\newcommand{\Aa}{{\mathcal{A}}}   
\newcommand{\Bb}{{\mathcal{B}}}
\newcommand{\Ff}{{\mathcal{F}}}
\newcommand{\Hh}{{\mathcal{H}}}
\newcommand{\Vv}{{\mathcal{V}}}
\newcommand{\coker}{{\rm coker\, }}  
\newcommand{\SPAN}{{\rm span\, }}         
\newcommand{\Grad}{\mathop{\mathrm{Grad}}}    
\newcommand{\INDEX}{\mathop{\mathrm{index}}}     
\newcommand{\IND}{{\rm ind}}                  
\newcommand{\cgraph}[1]{\Gamma_{\kern-.5ex{}#1}}     
\newcommand{\ind}{{\rm Ind}} 
\newcommand{\Crit}{{\rm Crit}}        
\newcommand{\Ho}{{\rm H}}              
\newcommand{\HM}{{\rm HM}}             
\newcommand{\CM}{{\rm CM}}             
\newcommand{\ev}{{\rm ev}}
\newcommand{\norm}{{\rm norm}}
\newcommand{\orient}[1]{\langle #1 \rangle}  
\newcommand{\inner}[2]{\langle #1, #2\rangle}   
\newcommand{\INNER}[2]{\left\langle #1, #2\right\rangle}
\def\NABLA#1{{\mathop{\nabla\kern-.5ex\lower1ex\hbox{$#1$}}}}
\def\Nabla#1{\nabla\kern-.5ex{}_{#1}}
\def\Tabla#1{\Tilde\nabla\kern-.5ex{}_{#1}}
\def\norm#1{\mathopen\|#1\mathclose\|}
\def\Norm#1{\left\|#1\right\|}
\renewcommand{\Tilde}{\widetilde}
\newcommand{\p}{{\partial}}
\newlength\eqshift
\renewcommand\theequation{\thesection.\arabic{equation}}
\let\savetheequation\theequation
\renewcommand*\env@matrix[1][\arraystretch]{%
  \edef\arraystretch{#1}%
  \hskip -\arraycolsep
  \let\@ifnextchar\new@ifnextchar
  \array{*\c@MaxMatrixCols c}}
\let\save@mathaccent\mathaccent
\newcommand*\if@single[3]{%
  \setbox0\hbox{${\mathaccent"0362{#1}}^H$}%
  \setbox2\hbox{${\mathaccent"0362{\kern0pt#1}}^H$}%
  \ifdim\ht0=\ht2 #3\else #2\fi
  }
\newcommand*\rel@kern[1]{\kern#1\dimexpr\macc@kerna}
\newcommand*\widebar[1]{\@ifnextchar^{{\wide@bar{#1}{0}}}{\wide@bar{#1}{1}}}
\newcommand*\wide@bar[2]{\if@single{#1}{\wide@bar@{#1}{#2}{1}}{\wide@bar@{#1}{#2}{2}}}
\newcommand*\wide@bar@[3]{%
  \begingroup
  \def\mathaccent##1##2{%
    \let\mathaccent\save@mathaccent
    \if#32 \let\macc@nucleus\first@char \fi
    \setbox\z@\hbox{$\macc@style{\macc@nucleus}_{}$}%
    \setbox\tw@\hbox{$\macc@style{\macc@nucleus}{}_{}$}%
    \dimen@\wd\tw@
    \advance\dimen@-\wd\z@
    \divide\dimen@ 3
    \@tempdima\wd\tw@
    \advance\@tempdima-\scriptspace
    \divide\@tempdima 10
    \advance\dimen@-\@tempdima
    \ifdim\dimen@>\z@ \dimen@0pt\fi
    \rel@kern{0.6}\kern-\dimen@
    \if#31
      \overline{\rel@kern{-0.6}\kern\dimen@\macc@nucleus\rel@kern{0.4}\kern\dimen@}%
      \advance\dimen@0.4\dimexpr\macc@kerna
      \let\final@kern#2%
      \ifdim\dimen@<\z@ \let\final@kern1\fi
      \if\final@kern1 \kern-\dimen@\fi
    \else
      \overline{\rel@kern{-0.6}\kern\dimen@#1}%
    \fi
  }%
  \macc@depth\@ne
  \let\math@bgroup\@empty \let\math@egroup\macc@set@skewchar
  \mathsurround\z@ \frozen@everymath{\mathgroup\macc@group\relax}%
  \macc@set@skewchar\relax
  \let\mathaccentV\macc@nested@a
  \if#31
    \macc@nested@a\relax111{#1}%
  \else
    \def\gobble@till@marker##1\endmarker{}%
    \futurelet\first@char\gobble@till@marker#1\endmarker
    \ifcat\noexpand\first@char A\else
      \def\first@char{}%
    \fi
    \macc@nested@a\relax111{\first@char}%
  \fi
  \endgroup
}
\long\def\symbolfootnote[#1]#2{\begingroup%
\def\thefootnote{\fnsymbol{footnote}}\footnote[#1]{#2}\endgroup}
\begin{document}
\sloppy

\author{\quad Urs Frauenfelder \quad \qquad\qquad
             Joa Weber\footnote{
  Email: urs.frauenfelder@math.uni-augsburg.de
  \hfill
  joa@ime.unicamp.br
  }
        %
        %
    \\
    Universit\"at Augsburg \qquad\qquad
    UNICAMP
}

\title{The regularized free fall \\ II -- Homology computation via
  heat flow}

\date{\today}

\maketitle 
%


%
%

%





\begin{abstract}
In~\cite{Barutello:2020a} Barutello, Ortega, and Verzini
introduced a non-local functional which regularizes the free fall.
This functional has a critical point at infinity and therefore does
not satisfy the Palais-Smale condition.

In this article we study the $L^2$ gradient flow
which gives rise to a non-local heat flow.
We construct a rich cascade Morse chain complex 
which has one generator in each degree $k\ge 1$.
Calculation reveals a rather poor Morse homology
having just one generator.
In particular, there must be a wealth of solutions
of the heat flow equation. These can be interpreted 
as solutions of the Schr\"odinger equation after a Wick rotation.
\end{abstract}

\tableofcontents

\section{Introduction}

The free fall describes the motion of a particle on a line in the
gravitational field of a heavy body. The particle will after some time
collide with the heavy body.
However, collisions can be regularized so that after collision the
particle bounces back.
An interesting new approach for regularizing collisions was
discovered in the recent paper~\cite{Barutello:2020a}
by Barutello, Ortega, and Verzini.
Change of time gives rise to a \emph{delayed}, that is non-local,
regularized functional $\Bb$ with an intriguing mathematical structure.

In fact, there are two non-local functionals describing the free fall,
namely, a Lagrangian version $\Bb$, defined in~(\ref{eq:Bb})
below, and a Hamiltonian version~$\Aa_\Hh$.
The two functionals are related to each other by a non-local Legendre transform
as studied in~\cite{Frauenfelder:2021a}.
In the present article we compute the Morse homology of the Lagrangian version
$\Bb$ with respect to an $L^2$ metric.
This is a non-local analogue of the heat flow Morse homology
of the second author~\cite{weber:2013b,weber:2013a,weber:2014c}.

The significance of the free fall lies in the fact that it is the
starting point of the exploration of more complicated systems
like the Helium problem which is an active topic of research
of the first named author with Cieliebak and Volkov~\cite{Cieliebak:2021a}.

\smallskip
In this paper we introduce the heat flow homology for the Lagrangian
functional $\Bb$ of the free fall and compute it.
It turns out that there is a rich interplay between critical points and
gradient flow lines. Although the chain groups are infinite
dimensional it turns out that in sharp contrast
the heat flow Morse homology for the free fall is extremely meager, it
is actually concentrated in degree~$1$.
In particular, by the contrast principle
``large chain complex -- low homology'' many solutions of the
heat flow must exist.

To be more precise since the functional $\Bb$ is not Morse, but only
Morse-Bott, one has to modify standard Morse homology.
We shall choose cascade Morse homology,
established by the first author in his PhD thesis, see~\cite{Frauenfelder:2004a},
because it continues to work with the original gradient flow
of $\Bb$; this is not the case if one perturbs $\Bb$
and does Morse homology with a nearby Morse functional $\widetilde\Bb$.

\begin{theoremABC}\label{thm:main}
The cascade Morse homology of the Morse-Bott functional $\Bb$ is
\[
   \HM_*(\Bb;\Z_2)
   =
   \begin{cases}
      \Z_2&\text{, $*=1$}
      \\
      0&\text{, else}
   \end{cases}
\]
\end{theoremABC}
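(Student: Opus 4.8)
The plan is to make the cascade complex completely explicit, use a dimension count to show that its differential can only connect consecutive critical circles, and thereby reduce the theorem to the mod-$2$ count of a single family of $0$-dimensional cascade moduli spaces. Recall from the construction in the preceding sections that, apart from the critical point at infinity — which is \emph{not} a generator of the cascade complex and is responsible for the failure of Palais--Smale — the critical set of $\Bb$ is a disjoint union of circles $\Cc_k\cong S^1$, $k\ge1$ (the time-shift orbits of the $k$-fold iterated regularized free fall solutions), each Morse--Bott nondegenerate, and that fixing on every $\Cc_k$ a perfect Morse function $h$ with one minimum $\check c_k$ and one maximum $\hat c_k$ produces the cascade generators $\check c_k,\hat c_k$ in degrees $2k-1$ and $2k$. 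The chain group is therefore $\Z_2$ in each degree $\ge1$ and $0$ otherwise, as advertised, and it remains only to determine the differential $\partial$.

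Next I would pin down $\partial$ by counting dimensions. The heat flow is the negative $L^2$-gradient flow, so $\Bb$ strictly decreases along nonconstant trajectories, and $\Bb(\Cc_k)$ is strictly monotone in $k$; hence a length-$m$ cascade from a generator on $\Cc_{k_-}$ to one on $\Cc_{k_+}$ forces $k_-\ge k_+$, with equality only for $m=1$ ($h$-trajectories inside one circle). Since the differential demands the $\R$-reduced moduli space to be $0$-dimensional while $\mu(\Cc_k)-\mu(\Cc_{k-1})=2$, the only possible contributions are: (a) within-circle $h$-trajectories, which give the classical Morse differential of $h$ on $S^1$, so $\partial\hat c_k=2\check c_k=0$ and nothing enters $\partial\check c_k$ this way; and (b) single heat flow lines from $\Cc_k$ to $\Cc_{k-1}$ whose asymptotic ends are constrained to the two points $\check c_k$ and $\hat c_{k-1}$ — two point-constraints cutting the $2$-dimensional moduli space of flow lines $\Cc_k\rightsquigarrow\Cc_{k-1}$ down to dimension $0$; flow lines dropping the iteration number by $\ge2$ never land in dimension $0$. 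Hence
\[
   \partial\hat c_k=0\ \ (k\ge1),\qquad \partial\check c_1=0,\qquad \partial\check c_k=n_k\,\hat c_{k-1}\ \ (k\ge2),\qquad n_k:=\#_2\,\Mm(\check c_k,\hat c_{k-1})\in\Z_2 .
\]

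The crux is to show $n_k=1$ for every $k\ge2$. Two analytic points precede the count. First, \emph{compactness}: a sequence in $\Mm(\check c_k,\hat c_{k-1})$ cannot degenerate by letting part of the trajectory escape to the critical point at infinity, because along any such heat flow line the action stays pinned in the compact window between $\Bb(\Cc_{k-1})$ and $\Bb(\Cc_k)$, which confines the trajectory to a region on which Palais--Smale does hold; so $\Mm(\check c_k,\hat c_{k-1})$ is compact. Second, \emph{transversality}: for these low-dimensional moduli spaces I expect it to be automatic, the linearization of the non-local heat operator of the one-dimensional regularized free fall being of Sturm--Liouville type with kernel and cokernel accessible by hand, so that $\Mm(\check c_k,\hat c_{k-1})$ is a genuine compact $0$-manifold. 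The count itself I would obtain either from an explicit description of the connecting trajectories (the underlying free fall being explicitly solvable) or, more robustly, by induction on $k$: verify $n_2=1$ by a direct computation and then propagate $n_k=1\Rightarrow n_{k+1}=1$ by an iteration/gluing argument. I expect the near-infinity compactness, the transversality, and the base case $n_2=1$ to be the genuine obstacles; the rest is bookkeeping.

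Granting this, the cascade complex is
\[
   0\longleftarrow\Z_2\xleftarrow{\,0\,}\Z_2\xleftarrow{\,\cong\,}\Z_2\xleftarrow{\,0\,}\Z_2\xleftarrow{\,\cong\,}\Z_2\longleftarrow\cdots
\]
in degrees $1,2,3,4,5,\dots$, with $\partial^2=0$ automatic. Thus $\ker\partial$ is spanned by $\check c_1$ together with all $\hat c_k$, while $\im\partial$ is spanned by all $\hat c_k$, and therefore $\HM_*(\Bb;\Z_2)=\Z_2$ in degree $1$ and $0$ in every other degree, as claimed. Conceptually, this is the reduced homology $\tilde H_*(S^1;\Z_2)$ of the time-shift circle, degree-shifted by one: the missing degree-$0$ class is precisely the imprint of the critical point at infinity, which never enters the complex — this is the structural reason behind the ``huge chain complex, minuscule homology'' phenomenon.
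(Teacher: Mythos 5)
Your setup of the cascade complex is correct and matches the paper: the generators, their degrees, the vanishing of $\partial$ on the maxima by the mod-$2$ count of the two in-circle trajectories, and the reduction of the whole theorem to showing that the count $n_k$ of connecting flow lines from the minimum on $\Cc_k$ to the maximum on $\Cc_{k-1}$ is odd. But at exactly that point — which is the entire analytic and geometric content of the theorem — your proposal stops being a proof. You write that you ``would obtain'' $n_k=1$ either from an explicit description of the connecting trajectories or by induction with a base case $n_2=1$ and a gluing step, and you yourself flag compactness, transversality and the base case as ``the genuine obstacles.'' None of these is carried out, and the proposed induction is particularly problematic: the only algebraic relation available, $\partial^2=0$, is automatically satisfied here because $\partial\hat c_k=0$ for all $k$, so gluing/breaking of one-dimensional moduli spaces gives no constraint relating $n_k$ to $n_{k+1}$. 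The paper in fact explicitly disclaims any direct construction of the flow lines (``we do not provide a direct existence proof of the heat flow gradient flow lines'').

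The idea you are missing is the paper's finite-dimensional localization. A Fourier-mode analysis of the non-local heat equation (Proposition~\ref{prop:interval}) shows that any flow line asymptotic to $C_{k+1}$ and $C_k$ lies entirely in the $4$-dimensional subspace $V_k=\SPAN\{\phi_k,\psi_k,\phi_{k+1},\psi_{k+1}\}$, and the same holds for the kernel of the linearization (Proposition~\ref{prop:interval-eta}), which also settles transversality by an index count after choosing $M_k$ to be a regular value of an evaluation map. On $V_k^\times=V_k\setminus\{0\}$ the restricted functional is coercive, so its cascade Morse homology is $\Ho_*(\SS^3;\Z_2)$; since this vanishes in degrees $1$ and $2$ while $m_{k+1}$ and $M_k$ sit in restricted degrees $2$ and $1$, the count must be odd — and by the localization these are exactly the flow lines of the full functional. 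This simultaneously disposes of your compactness worry (the moduli space lives in a finite-dimensional pointed vector space on which the functional is proper, so no Palais--Smale argument in the infinite-dimensional space is needed) and produces the count without ever exhibiting a single trajectory. Without some substitute for this step, your argument establishes only that $\HM_*$ is a subquotient of the stated chain complex, not the value of the homology.
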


\begin{proof}
Proposition~\ref{prop:cascade-complex}.
\end{proof}

An interesting aspect of the heat flow equation is that after applying
a Wick rotation, that is considering imaginary time, one obtains a
solution of the Schr\"odinger equation.

\medskip
In two planned future articles III and IV we intend to study the
Hamiltonian analogue of the heat flow homology in order to obtain
a non-local Floer homology and relate the two
by an adiabatic limit in the spirit of~\cite{salamon:2006a}.
In the first step of this project, article I, we
proved~\cite{Frauenfelder:2021a} that the Fredholm indices in both
theories agree. The gradient flow equations in the Hamiltonian theory
are non-local perturbed holomorphic curve equations
which after a Wick rotation become solutions of a transport equation
and hence solve a wave equation.

\medskip
Theorem~\ref{thm:main} might also be interpreted that
the Morse homology of the functional $\Bb$ computes the homology
of a Conley pair $(N,L)$ where $N$ is the domain of $\Bb$
and $L:=\{\Bb<C_1\}$ is the sub-level set corresponding to the lowest
critical value.
It is therefore conceivable that Theorem~\ref{thm:main}
can be proved by an infinite dimensional Conley index argument
as in~\cite{weber:2014c}; for a short overview see~\cite{weber:2014b}.

\smallskip 

In the present paper we follow a different approach by arguing directly
with the Morse complex without reference to the topology of the
underlying space.
However, we do not provide a direct existence proof
of the heat flow gradient flow lines. We deduce their existence
with some tricks.
The crucial observation is that if one fixes the asymptotics then
the heat flow gradient flow lines lie in some finite dimensional subspaces.
This allows us to deduce the existence of gradient flow lines
by considering the finite dimensional Morse homology of the
restriction of the action functional to the finite dimensional subspaces.
Now the crucial step is to
chose the auxiliary Morse function on the critical manifold
in such a way that the Morse-Smale condition holds
simultaneously on the full space as well as on the finite dimensional
subspaces.

\medskip
Although triviality of the negative bundles $\Vv^- C_k$ over the critical
manifolds $C_k$ is not used in the present article (since we use $\Z_2$
coefficients only), triviality is relevant for $\Z$ coefficients and this
is why we include the proof as an appendix.
In the appendix we also include the short argument, implicit
in~\cite{Frauenfelder:2021a}, that the functional $\Bb$ is indeed
Morse-Bott of nullity 1.

\bigskip\noindent
{\bf Idea of proof}
The functional $\Bb$ is Morse-Bott and its critical manifold $C$
consists of countably many circles $C_k\cong \SS^1$ of
odd Morse indices $2k-1$ for $k\in\N$, as illustrated
on the left hand side of Figure~\ref{fig:fig-cascade-complex}.

\begin{figure}
  \centering
  \includegraphics
                             {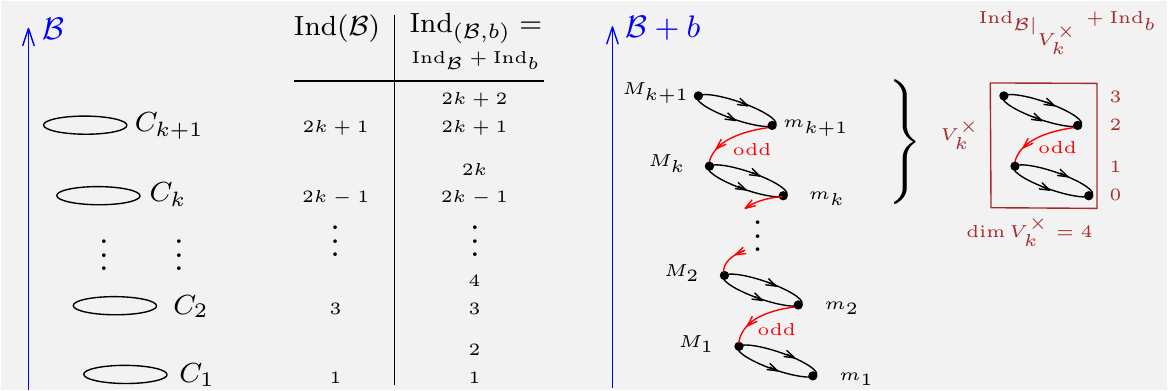}
  \caption{Cascade complex - the cycles are $m_1$ and all $M_k=\p m_{k+1}$}
   \label{fig:fig-cascade-complex}
\end{figure} 

We choose on each of the circles $C_k$
an auxiliary Morse function $b_k$ having exactly one maximum
$M_k$ and exactly one minimum $m_k$,
as illustrated on the right hand side of Figure~\ref{fig:fig-cascade-complex}.
The cascade indices $\ind_{(\Bb,b)}$ are defined and given by
\begin{equation*}
\begin{split}
   \ind_{(\Bb,b)}(M_k)
   &:=\ind_\Bb(M_k)+\ind_{b}(M_k)=2k
   \\
   \ind_{(\Bb,b)}(m_k)
   &:=\ind_\Bb(m_k)+\ind_{b}(m_k)=2k-1
\end{split}
\end{equation*}
Therefore the cascade chain groups have exactly one generator
in each degree
\begin{equation*}
   \CM_\ell(\Bb,b;\Z_2)
   =
   \begin{cases}
     \Z_2 \orient{M_k}&\text{, $\ell=2k$}
     \\
     Z_2 \orient{m_k}&\text{, $\ell=2k-1$}
   \end{cases}
\end{equation*}
whenever $\ell\in\N$ and they are zero else.
On each circle $C_k$ there are two gradient flow lines
from $M_k$ to $m_k$. Since we count gradient flow lines
modulo two we have $\p M_k=0$.
More subtle is to count the cascades from
$m_{k+1}$ to $M_k$. These are solutions of the non-local
heat flow equation. We do not construct them directly, but
deduce their existence indirectly via the following {\bf crucial observation}:
the heat flow gradient flow lines lie in finite dimensional subspaces
$V_k^\times$, in fact $\dim V_k^\times=4$.
The restriction $\Bb_k$ of the functional $\Bb$ to $V_k^\times$
has as critical point set precisely the circles $C_{k+1}$ and $C_k$.
We prove that by careful choice of the auxiliary Morse function $b_k$
on $C_k$ we can achieve that our gradient flow equation
satisfies the Morse-Smale condition simultaneously
as well on $V_k^\times$ as on the full space.
This allows us to consider the cascade complex of $\Bb_k$
as a sub-complex of the cascade complex of $\Bb$
whose degree is however shifted by $2k-1$.
On the finite dimensional subspaces $V_k^\times$
we can use topology to compute the cascade Morse homology
which turns out to be the homology of the 3-dimensional sphere $\SS^3$
which has one generator in degree 0 and one generator in degree 3.
Therefore we can conclude that on the finite dimensional subspaces
$V_k^\times$ there is an {\color{red} odd number} of gradient flow lines from
$m_{k+1}$ to $M_k$.

By our crucial observation the gradient flow lines
of $\Bb$ from $m_{k+1}$ to $M_k$ are precisely the gradient flow lines
of $\Bb_k$ from $m_{k+1}$ to $M_k$.
Thus $\p m_{k+1}=M_k$. Here and throughout we count modulo two.
In particular, the minima $m_{k+1}$ are no cycles, while the maxima
$M_k$ are cycles but boundaries as well. Hence the only cycle which is
not a boundary is the overall minimum $m_1$.
Therefore the homology has a single generator and this generator
sits in degree 1.

\smallskip\noindent
{\bf Acknowledgements.}
UF acknowledges support by DFG grant
FR 2637/2-2.

\section{The Morse-Bott functional \boldmath$\Bb$}
\unboldmath

A quite new approach to the regularization of collisions was
discovered in the recent paper~\cite{Barutello:2020a}
by Barutello, Ortega, and Verzini where the change of time
leads to a \emph{delayed} functional.
In the case of the 1-dimensional Kepler problem
this functional attains the following form
\begin{equation}\label{eq:Bb}
\begin{split}
   \Bb\colon W^{1,2}_\times:=W^{1,2}(\SS^1,\R)\setminus \{0\}&\to\R \\
   q&\mapsto {\color{brown} 4 \norm{q}^2}\,\tfrac12\norm{\dot
   q}^2+\frac{1}{\norm{q}^2}
\end{split}
\end{equation}
where $\norm{\cdot}$ is the $L^2$ norm associated to the $L^2$ inner
product $\inner{\cdot}{\cdot}$.
One might interpret this functional
as a non-local mechanical system consisting of
kinetic minus potential energy.
As shown in~\cite{Frauenfelder:2021a} the differential
\begin{equation*}
\begin{split}
   d\Bb\colon W^{1,2}_\times \times W^{1,2}
   :=W^{1,2}(\SS^1,\R)\setminus \{0\}\times W^{1,2}(\SS^1,\R)&\to\R
\end{split}
\end{equation*}
is given by
\begin{equation*}
\begin{split}
   d\Bb(q,\xi)
   &=4\inner{q}{\xi}\norm{\dot q}^2+4\norm{q}^2\inner{\dot q}{\dot\xi}
   -2\frac{\inner{q}{\xi}}{\norm{q}^4}\\
   &=
   {\color{brown} 4\norm{q}^2}\INNER{-\ddot q+
   \underbrace{\left(
      \frac{\norm{\dot q}^2}{\norm{q}^2}-\frac{1}{2\norm{q}^6}
   \right)}_{=:\alpha} q}
   {\xi}
\end{split}
\end{equation*}
where identity two is valid for sufficiently regular $q$,
say $q\in W^{2,2}(\SS^1,\R)\setminus \{0\}$.

\begin{lemma}[Critical points, {\cite{Frauenfelder:2021a}}]
\label{le:crit-pts-Bb}
The functional $\Bb\colon W^{1,2}_\times\to\R$

The set $\Crit \Bb$ of critical points of $\Bb$ consists of the functions
\begin{equation}\label{eq:q-heat}
   q_k(t)=c_k\cos 2\pi kt,\quad
   c_k=\frac{1}{2^\frac{1}{6}(\pi k)^\frac{1}{3}}
   \in(0,1)
   ,\quad k\in\N
\end{equation}
and their time shifts
\begin{equation}\label{eq:q-shift}
   \left(\sigma_* q_k\right)(t):=q_k(t+\sigma)
   =c_k\bigl(
   \cos 2\pi k\sigma \underbrace{\cos 2\pi k t}_{=:\phi_k(t)}
   -\sin 2\pi k\sigma \underbrace{\sin 2\pi k t}_{=:\psi_k(t)}
   \bigr)
\end{equation}
where $\sigma,t\in\SS^1$.
The corresponding critical values are given by
\begin{equation}\label{eq:crit-values}
   \Bb(q_k)
   =\Bb(\sigma_*q_k)
   =2^\frac{1}{3} 3(\pi k)^\frac{2}{3}
\end{equation}
and the Morse indices are
\begin{equation}\label{eq:ind}
   \ind(q_k)
   =\ind(\sigma_*q_k)
   =2k-1
\end{equation}
\end{lemma}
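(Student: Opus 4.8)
The plan is to reduce the critical point equation to a linear constant-coefficient ODE on the circle, classify its $1$-periodic solutions, and then read off critical values and Morse indices by an elementary Fourier calculation; the $\SS^1$-symmetry $q\mapsto\sigma_*q$ of $\Bb$ (both $\norm{q}$ and $\norm{\dot q}$ are $\sigma_*$-invariant) reduces everything to the representatives $q_k$. First, a point $q\in W^{1,2}_\times$ is critical iff $d\Bb(q,\xi)=0$ for all $\xi\in W^{1,2}$, which by the first displayed formula for $d\Bb$ means $\inner{\dot q}{\dot\xi}=-\alpha\inner{q}{\xi}$ for all $\xi$, where $\alpha=\norm{\dot q}^2/\norm{q}^2-1/(2\norm{q}^6)$ is a real \emph{constant} (the $L^2$ norms are numbers, not functions of $t$). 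Hence $q$ is a weak, and by elliptic bootstrapping a smooth, $1$-periodic solution of $\ddot q=\alpha q$. If $\alpha>0$ the only such solution is $0$, which is excluded; if $\alpha=0$ then $q$ is constant, so $\norm{\dot q}=0$ and $\alpha=-1/(2\norm{q}^6)<0$, a contradiction. So $\alpha<0$; writing $\alpha=-(2\pi\nu)^2$ with $\nu>0$, $1$-periodicity forces $\nu=k\in\N$, hence $q=a\phi_k+b\psi_k$ with $(a,b)\ne(0,0)$, which up to the time shift $\sigma_*$ equals $c\,\phi_k$ for some $c>0$. Since $\Crit\Bb$ and the self-consistency condition on $\alpha$ are $\sigma_*$-invariant, it suffices to treat $q=c\phi_k$: using $\norm{\phi_k}^2=\tfrac12$ and $\norm{\dot\phi_k}^2=\tfrac12(2\pi k)^2$, the requirement $\alpha=-(2\pi k)^2$ becomes $4\pi^2k^2-4/c^6=-4\pi^2k^2$, i.e. $c^6=1/(2\pi^2k^2)$, i.e. $c=c_k$ as in~(\ref{eq:q-heat}); and $c_k\in(0,1)$ since $2\pi^2k^2>1$. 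Formula~(\ref{eq:q-shift}) is then just the cosine addition theorem, so $\Crit\Bb=\{\sigma_*q_k:\sigma\in\SS^1,\ k\in\N\}$. The critical values follow by inserting $q=q_k$ and $c_k^6=1/(2\pi^2k^2)$ into $\Bb(q)=2\norm{q}^2\norm{\dot q}^2+\norm{q}^{-2}$, giving $\Bb(q_k)=2\pi^2k^2c_k^4+2c_k^{-2}=(2\pi^2k^2)^{1/3}+2(2\pi^2k^2)^{1/3}=2^{1/3}\,3\,(\pi k)^{2/3}$, which is~(\ref{eq:crit-values}).

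For the Morse index I write $F(q):=-\ddot q+\alpha(q)q$, so that $d\Bb(q,\cdot)=4\norm{q}^2\inner{F(q)}{\cdot}$; differentiating again and using $F(q_k)=0$, the Hessian at $q_k$ is the bilinear form $(\xi,\eta)\mapsto 4\norm{q_k}^2\inner{-\ddot\eta+\alpha\eta+(D\alpha[\eta])q_k}{\xi}$ with $\alpha=-(2\pi k)^2$. Because $(D\alpha[\eta])q_k$ is a multiple of $\phi_k$ and $D\alpha[\eta]$ depends on $\eta$ only through $\inner{\phi_k}{\eta}$, this form splits $L^2$-orthogonally as $\R\phi_k$ against its complement, and on the complement it is diagonal in the Fourier basis $\{1\}\cup\{\phi_n,\psi_n\}_{n\ge1}$: on the frequency-$n$ block it equals $4\norm{q_k}^2(2\pi)^2(n^2-k^2)$ times the squared norm. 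This is negative exactly for $n\in\{0,1,\dots,k-1\}$, contributing $1+2(k-1)=2k-1$; it vanishes on the single mode $\psi_k$; and it is positive for $n>k$. On $\R\phi_k$ a short computation gives $D\alpha[\phi_k]=24c_k^{-7}$, so the value on $s\phi_k$ equals $24c_k^{-4}s^2>0$. Hence $\ind(q_k)=2k-1$ with nullity $1$, which is~(\ref{eq:ind}); the time shifts are handled by $\sigma_*$-invariance. The unique null direction $\psi_k$ is, up to scaling, $\tfrac{d}{d\sigma}\big|_{0}\sigma_*q_k$, i.e. the tangent to the critical circle, which confirms the Morse--Bott picture.

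The one genuinely delicate point is the index count: the non-local term $(D\alpha[\eta])q_k$ feeds the extra Fourier mode $\phi_k$ back into the second variation, and one must check carefully that on $\R\phi_k$ this produces a strictly \emph{positive} direction rather than disturbing the tally — so that the sole zero eigenvalue is exactly $\psi_k$ and the negative subspace has dimension precisely $2k-1$. Everything else (regularity of critical points, the ODE classification, the value and $c_k$ computation) is routine.
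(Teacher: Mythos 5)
Your argument is correct and follows essentially the same route as the paper: the paper defers the proof to~\cite{Frauenfelder:2021a} and to its own appendix, where the critical-point equation is likewise reduced to $-\ddot q+\alpha q=0$ with the self-consistency condition fixing $c_k$, and the Morse index is read off from the explicit Hessian~(\ref{eq:A-explicit}) diagonalized in the (shifted) Fourier basis, yielding exactly your eigenvalues $4\pi^2(n^2-k^2)$, the null direction $\psi_k$, and the positive eigenvalue $12(2\pi k)^2$ on $\R\phi_k$ (cf.~(\ref{eq:coeffs})--(\ref{eq:eigenvalues})). Your computations, including $D\alpha[\phi_k]=24c_k^{-7}$ and the index count $1+2(k-1)=2k-1$, check out.
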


\boldmath
\section{\boldmath$L^2_q$ gradient equation and flow lines}
\unboldmath

We consider the following metric on $W^{1,2}_\times$.
Given a point $q\in W^{1,2}_\times$ and two tangent vectors
\[
\xi_1,\xi_2\in T_q W^{1,2}_\times=W^{1,2}(\SS^1,\R)=:W^{1,2}
\]
we define what we call the \textbf{\boldmath$L^2_q$ inner product} by
\begin{equation}\label{eq:q-metric}
   \inner{\xi_1}{\xi_2}_q
   :={\color{brown} 4\norm{q}^2} \inner{\xi_1}{\xi_2}
   ,\qquad
 \text{where $\inner{\xi_1}{\xi_2}:=\int_0^1 \xi_1(t)\xi_2(t)\, dt$}
\end{equation}
Note that $\inner{\cdot}{\cdot}$ is the standard $L^2$ inner product
on $L^2(\SS^1,\R)$. In this notation
\begin{equation*}
   \Bb(q)
   =\tfrac12\INNER{\dot q}{\dot q}_q+\frac{1}{\norm{q}^2}
   ,\qquad
   d\Bb(q,\xi)
   =
   \INNER{-\ddot q+\alpha q}
   {\xi}_q
\end{equation*}
The $L^2_q$ gradient of $\Bb$ at $q$ is denoted and given by
\begin{equation}\label{eq:grad-Bb}
   \Grad\Bb(q)
   =-\ddot q+\alpha q,\qquad
   \alpha=\alpha_q :=\left(
   \frac{\norm{\dot q}^2}{\norm{q}^2}-\frac{1}{2\norm{q}^6}
   \right) \in\R
\end{equation}

\boldmath
\subsection*{Flow lines}
\unboldmath

A smooth cylinder $u\colon\R\times\SS^1\to\R$ whose associated path
of loops $s\mapsto u_s:=u(s,\cdot)$ avoids the zero loop
is called a \textbf{heat flow line}\footnote{
  a downward gradient flow line in the $L^2_q$ metric
  of the non-local action functional $\Bb$
  }
if it satisfies the scale ode given by
\begin{equation}\label{eq:heat}
   \Ff(u):=\p_s u -\p_t\p_t u+\alpha_s u =0,\qquad
   \alpha_s:=
      \frac{\norm{\p_t u_s}^2}{\norm{u_s}^2}-\frac{1}{2\norm{u_s}^6}
\end{equation}

\begin{remark}[Wick rotation]
If one considers the above heat flow equation~(\ref{eq:heat})
in imaginary time $is$, corresponding to a Wick rotation,
one obtains the following non-local Schr\"odinger equation
\begin{equation*}\label{eq:Schroedinger}
   i\p_s u -\p_t\p_t u+\alpha_s u =0
\end{equation*}
\end{remark}

\begin{remark}[Asymptotic boundary values of heat flow lines]
If a heat flow line $u$, that is any smooth cylinder
$u\colon\R\times\SS^1\to\R$ such that $\Ff(u)=0$,
admits a non-empty $\omega$-limit set,
then this set $\omega_\pm(u)=\{q_\pm\}$ consists of a single critical
point~(\ref{eq:q-heat}) of the functional $\Bb$
(this holds since $\Bb$ is Morse-Bott by Lemma~\ref{le:Morse-Bott}).
In this case it is well known
that the gradient flow line $s\mapsto u_s:=u(s,\cdot)$
converges exponentially to $q_\pm$, as flow time $s\to\pm\infty$.
The exponential rate of decay is determined by the spectral gap,
namely, the smallest absolute value of a non-zero eigenvalue.

In the case of the functional $\Bb$, non-emptiness of the $\omega$-limit set
$\omega_\pm(u)$ is not guaranteed, neither in the forward direction
by trying to exploit the facts that there is a forward semi-flow and $\Bb$ is
bounded below (unfortunately a minimum is not achieved due to
escape to infinity), nor in both directions by imposing a finite
energy condition on $u$.
\end{remark}

\boldmath
\subsection*{Linearization}
\unboldmath

We shall linearize the map $\Ff$ defined by~(\ref{eq:heat})
at any smooth cylinder $u\colon\R\times\SS^1\to\R$ which has as
asymptotic boundary conditions two critical points,
see~(\ref{eq:q-shift}), of the Morse-Bott functional $\Bb$, in symbols
\begin{equation}\label{eq:asymp-lim}
   q_\pm:=\lim_{s\to\pm\infty} u(s,\cdot)\in Crit\,\Bb
\end{equation}

\begin{definition}
\label{eq:path-spaces}
Suppose $H$ is a separable Hilbert space.
Fix a monotone cutoff function $\beta\in C^\infty(\R,[-1,1])$
with $\beta(s)=-1$ for $s\le-1$ and $\beta(s)=1$ for $s\ge 1$.
Fix a constant $\delta\in (0,4\pi^2)$ and,\footnote{
  The interval $(0,4\pi^2)$ is contained in the spectral gap
  of any Hessian operator $A_{\sigma_* q_k}$ associated to a critical
  point; see~(\ref{eq:eigenvalues}).
  }
see Figure~\ref{fig:fig-Morse-exp-weight},
define a function $\gamma_\delta:\R\to\R$ by
\[
     \gamma_\delta(s):=e^{\delta\beta(s) s}.
\]
Pick a constant $p\in(1,\infty)$.
\begin{figure}
  \centering
  \includegraphics
                             [height=4cm]
                             {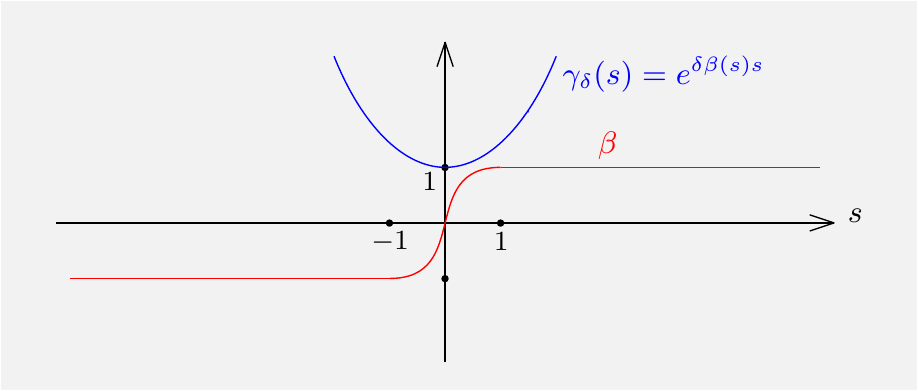}
  \caption{Monotone cutoff function $\beta$ and exponential weight $\gamma_\delta$}
  \label{fig:fig-Morse-exp-weight}
\end{figure}
Consider the Hilbert space valued Sobolev spaces defined for $k\in\N_0$ by
\begin{equation}\label{eq:W-kp-delta}
     W^{k,p}_\delta(\R,H)
     :=\{v\in W^{k,p}(\R,H)\mid
     \gamma_\delta v\in W^{k,p}(\R,H)\}
\end{equation}
with norm
$
     \Norm{v}_{W^{k,p}_\delta}
     :=\Norm{\gamma_\delta v}_{W^{k,p}}
$.
These spaces are Banach space, see e.g.~\cite[App.~A.2]{Frauenfelder:2021b}.
\end{definition}

Given a smooth cylinder $u\colon \R\times\SS^1\to\R$ subject to
asymptotic boundary conditions~(\ref{eq:asymp-lim}) and a smooth
compactly supported function $\xi\colon \R\times\SS^1\to\R$,
pick a family $u^\tau$ such that $u^0=u$ and
$\left.\tfrac{d}{d\tau}\right|_{\tau=0}u^\tau=\xi$,
say $u+\tau\xi$. Abbreviating $W^{k,2}=W^{k,2}(\SS^1)$,
then the linearization
\begin{equation}\label{eq:D_u-dom}
   D_u:=D\Ff(u)
   \colon W^{0,2}_\delta(\R,W^{2,2})\cap W^{1,2}_\delta(\R,W^{1,2})
   \to
   W^{0,2}_\delta(\R,L^2)
\end{equation}
is of the form
\begin{equation*}
\begin{split}
   D_u\xi:
   &=D\Ff(u)\xi :=\left.\tfrac{d}{d\tau}\right|_{\tau=0}\Ff(u^\tau)
   \\
   &=\left.\tfrac{d}{d\tau}\right|_{\tau=0}
   \left(
      \p_s u^\tau -\p_t^2 u^\tau+\alpha_{s,\tau} u^\tau
   \right)
   \\
   &=\p_s\xi-\p_t\p_t\xi+\alpha_s \xi
   +\left(\left.\tfrac{d}{d\tau}\right|_{\tau=0}\alpha_{s,\tau}\right) u
   \\
\end{split}
\end{equation*}
Further calculation 
shows that at any smooth cylinder $u$ we obtain
\begin{equation}\label{eq:D_u-xi}
\begin{split}
   D_u\xi
   &=\p_s\xi-\p_t\p_t\xi+\alpha_s \xi
   \\
   &\quad
   -2\frac{\inner{\p_t\p_tu_s}{\xi_s}}{\norm{u_s}^2}\, u
   -2\left(\frac{\norm{\p_tu_s}^2}{\norm{u_s}^4}
   -\frac{3/2}{\norm{u_s}^8}\right) \inner{u_s}{\xi_s}\, u
   \\
   &=\p_s\xi-\p_t\p_t\xi+\alpha_s \xi
   \\
   &\quad
   -\frac{2}{\norm{u_s}^2}\Biggl(
   \langle\underbrace{\p_t\p_tu_s}_{\p_su_s+\alpha_su_s} , \xi_s\rangle
   +\biggl(\underbrace{
     \frac{\norm{\p_tu_s}^2}{\norm{u_s}^2}
     -\frac{3/2}{\norm{u_s}^6}
     }_{\alpha_s-\frac{1}{\norm{u_s}^6}}
   \biggr)
   \inner{u_s}{\xi_s}
   \Biggr)\, u
   \\
   &=\p_s\xi-\p_t\p_t\xi+\alpha_s \xi
   \\
   &\quad
   -\frac{2}{\norm{u_s}^2}
   \left(
   \INNER{\p_s u_s}{\xi_s}+\left(2\alpha_s-\frac{1}{\norm{u_s}^6}\right)
   \INNER{u_s}{\xi_s}
   \right) u
\end{split}
\end{equation}
where the last identity holds whenever $u$ solves the heat equation.

\boldmath
\subsection*{The adjoint of the linearization}
\unboldmath

Given a smooth cylinder $u\colon\R\times\SS^1\to\R$, consider the
\textbf{\boldmath $L^2_u$ inner product} defined by
\[
   \INNER{\xi}{\eta}_u
   :=\int_{-\infty}^\infty\INNER{\xi_s}{\eta_s}_{u_s} ds
   :=\int_{-\infty}^\infty{\color{brown} 4\norm{u_s}^2}
   \INNER{\xi_s}{\eta_s} ds
\]
for compactly supported smooth functions
$\xi,\eta\colon \R\times\SS^1\to\R$.

The $L^2_u$ adjoint operator of $D_u$, notation $D_u^*$, is determined by the identity
\[
   \INNER{D_u\xi}{\eta}_u=\INNER{\xi}{D_u^*\eta}_u
\]
for compactly supported smooth vector fields $\xi$ and $\eta$ along
the cylinder $u$.
To get a formula for $D_u^*$ we rewrite the inner product as follows.
In the first step we use for $D_u\xi$ the equality~(\ref{eq:D_u-xi})
and in the second step we apply partial integration
with respect to $s$ to obtain
\begin{equation*}
\begin{split}
   &\INNER{D_u\xi}{\eta}_u\\
   &=\int_{-\infty}^\infty {\color{brown} 4\norm{u_s}^2}
   \cdot \Bigl(
     \INNER{\p_s\xi_s}{\eta_s}
     -\INNER{\p_t\p_t\xi_s}{\eta_s}
     +\INNER{\alpha_s\xi_s}{\eta_s}
   \Bigr)\, ds
   \\
   &\quad
   +\int_{-\infty}^\infty {\color{brown} 4 \norm{u_s}^2}
   \left({\color{cyan} -}\, 2 \frac{\inner{{\color{cyan}\p_t}\p_tu_s}{\xi_s}}{\norm{u_s}^2}
   -2\frac{\norm{\p_tu_s}^2}{\norm{u_s}^4} \inner{u_s}{\xi_s} 
   +\frac{3}{\norm{u_s}^8} \inner{u_s}{\xi_s}
   \right) \INNER{u_s}{\eta_s} ds
   \\
   &=\int_{-\infty}^\infty\Bigl( 
   - {\color{brown} 8}
   \INNER{u_s}{\p_su_s}\INNER{\xi_s}{\eta_s}
   +{\color{brown} 4\norm{u_s}^2}
     \INNER{\xi_s}{-\p_s\eta_s-\p_t\p_t\eta_s+\alpha_s\eta_s}
   \Bigr)\, ds
   \\
   &\quad
   -2\int_{-\infty}^\infty 
   \left( \frac{\inner{\p_t\p_tu_s}{\xi_s}_{\color{red} u_s}}{\norm{u_s}^2}
   +\frac{\norm{\p_tu_s}^2}{\norm{u_s}^4} \inner{u_s}{\xi_s}_{\color{red} u_s}
   -\frac{3/2}{\norm{u_s}^8} \inner{u_s}{\xi_s}_{\color{red} u_s}
   \right) \INNER{u_s}{\eta_s} ds
   \\
   &=\INNER{\xi}{-\p_s\eta-\p_t\p_t\eta-\alpha\eta}_{\color{red} u}
   -2\int_{-\infty}^\infty\frac{\INNER{u_s}{\p_su_s}}{\norm{u_s}^2}
   \INNER{\xi_s}{\eta_s}_{\color{red} u_s}\, ds
   \\
   &\quad
   -2\int_{-\infty}^\infty
   \left(
  \INNER{\xi_s}{\p_t\p_tu_s}_{\color{red} u_s}
   \frac{\inner{u_s}{\eta_s}}{\norm{u_s}^2} 
   +\INNER{\xi_s}{u_s}_{\color{red} u_s}\left(
   \frac{\norm{\p_tu_s}^2}{\norm{u_s}^4} 
   -\frac{3/2}{\norm{u_s}^8} 
   \right) \inner{u_s}{\eta_s}
   \right) ds
   \\
   &=\INNER{\xi}{D_u^*\eta}_u
\end{split}
\end{equation*}
Hence the $L^2_u$ adjoint of the
linearization $D_u$ is of the form
\begin{equation}\label{eq:D_u-xi*}
\begin{split}
   D_u^*\eta
   &=-\p_s\eta -\p_t\p_t\eta+\alpha_s\eta 
   {\color{orange} \,\,-\,2\frac{\inner{u_s}{\p_su_s}}{\norm{u_s}^2}\,\eta}
   \\
   &\quad
   -2 \frac{\inner{u_s}{\eta_s}}{\norm{u_s}^2}\, \p_t\p_tu
   -2\left(\frac{\norm{\p_tu_s}^2}{\norm{u_s}^4}
   -\frac{3/2}{\norm{u_s}^8}\right) \INNER{u_s}{\eta_s}
    u
\end{split}
\end{equation}
where the yellow extra term arose when we integrated by parts the $s$
variable.

\boldmath
\section{Fourier mode intervals and isolating neighborhoods}
\unboldmath

\boldmath
\subsection*{Flow lines}
\unboldmath

We write $u(s,t)$ for any fixed time $s\in\R$ as a Fourier series in
the form
\begin{equation}\label{eq:Fourier-u_s}
   u_s(t):=u(s,t)
   =a_0(s)+\sum_{k=1}^\infty\left(a_k(s)\cos2\pi kt+b_k(s)\sin2\pi kt\right)
\end{equation}

\begin{proposition}[Isolating neighborhood -- Fourier mode interval]
\label{prop:interval}
Assume that $u$ is a solution of the delayed heat equation~(\ref{eq:heat})
with asymptotic boundary conditions~(\ref{eq:asymp-lim}), that is
\begin{equation}\label{eq:asymp-limit}
   q_\pm(t):=\lim_{s\to\pm\infty} u(s,t)
   =a_{k_\pm}\cos2\pi k_\pm t+b_{k_\pm}\sin2\pi k_\pm t
\end{equation}
for every $t\in\SS^1$ and for some positive integers $k_\pm\in\N$
and constants $a_{k_\pm}$ and $b_{k_\pm}$; cf.~(\ref{eq:q-shift}).
Then the Fourier coefficients $a_k(s)\equiv 0$ and $b_k(s)\equiv 0$
vanish identically for all $k$ outside the interval $[k_+,k_-]$.
\end{proposition}

\begin{proof}
Pick a Fourier mode $k\in\N_0$.
With the constants defined by
\begin{equation}\label{eq:a_k^pm}
   a_k^\pm:=\begin{cases}
   0&\text{, $k\not=k_\pm$}\\
   a_{k_\pm}&\text{, $k=k_\pm$}\\
   \end{cases}
   ,\qquad
   b_k^\pm:=\begin{cases}
   0&\text{, $k\not=k_\pm$}\\
   b_{k_\pm}&\text{, $k=k_\pm$}\\
   \end{cases}
\end{equation}
we obtain the identity $\lim_{s\to\pm\infty} a_k(s)=a_k^\pm$.
Taking one $s$ derivative and two $t$ derivatives
of the Fourier series~(\ref{eq:Fourier-u_s}) the heat equation~(\ref{eq:heat})
implies that
\begin{equation}\label{ref:ode}
   a_k^\prime(s)+\left((2\pi k)^2 +\alpha_s\right) a_k(s)=0
\end{equation}
for every $s\in\R$. Being a first order ode we conclude that
\[
   a_k(0)\not= 0\qquad\Rightarrow\qquad a_k(s)\not=0 \;\;\;\forall s\in\R
\]
So we assume that $a_k(0)\not= 0$. It is useful to calculate the derivative

\[
   \frac{d}{ds} \ln \left(a_k(s)^2\right)
   =\frac{2a_k(s)a_k^\prime(s)}{a_k(s)^2}
   =2 \frac{a_k^\prime(s)}{a_k(s)}
   =-2 (2\pi k)^2-2\alpha_s
\]
where in the last equality we used the ode~(\ref{ref:ode}).

\smallskip
\noindent
{\bf Step 1.} $k<k_+$ $\Rightarrow$ $a_k\equiv 0$
\\
The proof of Step 1 works by showing that the assumption
$a_k(0)\not= 0$ produces a contradiction.
Since $k<k_+$ we get that
\[
   \frac{d}{ds}\Bigl( \ln \left(a_k(s)^2\right)
   -\ln \left(a_{k_+}(s)^2\right)\Bigr)
   =2 (2\pi k)^2 ({k_+}^2-k^2)>0
\]
This shows that for $s>0$ there is the inequality
\[
    \ln \left(a_k(s)^2\right)
   -\ln \left(a_{k_+}(s)^2\right)
   >
   \ln \left(a_k(0)^2\right)
   -\ln \left(a_{k_+}(0)^2\right)
\]
or equivalently
\[
   \ln \left(a_k(s)^2\right)- \ln \left(a_k(0)^2\right)
   > \ln \left(a_{k_+}(s)^2\right)- \ln \left(a_{k_+}(0)^2\right)
\]
for every $s>0$. Exponentiating we get that
\[
   \frac{a_k(s)^2}{a_k(0)^2}
   >\frac{a_{k_+}(s)^2}{a_{k_+}(0)^2}
\]
Taking the limit, as $s\to\infty$, of the right hand side we obtain
\[
   \lim_{s\to\infty}\frac{a_{k_+}(s)^2}{a_{k_+}(0)^2}
   =\frac{a_{k_+}^2}{a_{k_+}(0)^2}
   >0
\]
since $\lim_{s\to\infty} u_s=q_+$.
On the other hand, taking the limit, as $s\to\infty$, of the left hand
side we obtain
\[
   \lim_{s\to\infty}\frac{a_{k}(s)^2}{a_{k}(0)^2}
   =\frac{{\color{cyan} 0}}{a_{k}(0)^2}
   =0
\]
Here we used that the Fourier coefficient for $k$ in $q_+$
{\color{cyan} vanishes}.
The last three displayed formulas contradict each other. Therefore
the assumption that $a_k(0)\not=0$ had to be wrong.
We conclude that $a_k\equiv 0$ vanishes identically if $k<k_+$.

\smallskip
\noindent
{\bf Step 2.} $k>k_-$ $\Rightarrow$ $a_k\equiv 0$
\\
To prove this note that since $k>k_-$ we get that
\[
   \frac{d}{ds}\Bigl( \ln \left(a_k(s)^2\right)
   -\ln \left(a_{k_-}(s)^2\right)\Bigr)
   =2 (2\pi k)^2 ({k_-}^2-k^2)<0
\]
This shows that for $s<0$ there is the inequality
\[
    \ln \left(a_k(s)^2\right)
   -\ln \left(a_{k_-}(s)^2\right)
   >
   \ln \left(a_k(0)^2\right)
   -\ln \left(a_{k_-}(0)^2\right)
\]
or equivalently
\[
   \ln \left(a_k(s)^2\right)- \ln \left(a_k(0)^2\right)
   > \ln \left(a_{k_-}(s)^2\right)- \ln \left(a_{k_-}(0)^2\right)
\]
for every $s>0$. Exponentiating we get that
\[
   \frac{a_k(s)^2}{a_k(0)^2}
   >\frac{a_{k_-}(s)^2}{a_{k_-}(0)^2}
\]
Taking the limit as $s\to-\infty$ of the right hand side we obtain
\[
   \lim_{s\to-\infty}\frac{a_{k_-}(s)^2}{a_{k_-}(0)^2}
   =\frac{a_{k_-}^2}{a_{k_-}(0)^2}
   >0
\]
since $\lim_{s\to-\infty} u_s=q_-$.
On the other hand, taking the limit as $s\to-\infty$ of the left hand
side we obtain
\[
   \lim_{s\to-\infty}\frac{a_{k}(s)^2}{a_{k}(0)^2}
   =\frac{{\color{cyan} 0}}{a_{k}(0)^2}
   =0
\]
Here we used that the Fourier coefficient for $k$ in $q_-$ {\color{cyan} vanishes}.
The last three displayed formulas contradict each other. Therefore
the assumption that $a_k(0)\not=0$ had to be wrong.
We conclude that $a_k(s)\equiv 0$ vanishes identically if $k<k_-$.
The proof for $b_k(s)$ is analogous.
\end{proof}

\boldmath
\subsection*{Linearization}
\unboldmath

We write $u(s,t)$ for any fixed time $s\in\R$ as a Fourier series in
the form of equation~(\ref{eq:Fourier-u_s}).
Similarly we write $\xi\colon\R\times\SS^1\to\R$ for any fixed time
$s\in\R$ as a Fourier series in the form 
\begin{equation}\label{eq:Fourier-xi_s}
   \xi_s(t):=\xi(s,t)
   =A_0(s)+\sum_{k=1}^\infty\left(A_k(s)\cos2\pi kt+B_k(s)\sin2\pi kt\right)
\end{equation}

\begin{proposition}[The kernel of $D_u$ has the same Fourier mode interval as $u$]
\label{prop:interval-eta}
Let $u$ be a solution of the delayed heat equation~(\ref{eq:heat}) with asymptotic
boundary conditions~(\ref{eq:asymp-limit}),
namely, two critical points
\[
   q_\pm:=\lim_{s\to\pm\infty} u(s,\cdot)\in\Crit\,\Bb
\]
where $q_\pm$ is determined by a positive integer $k_\pm\in\N$
and two constants $a_{k_\pm},b_{k_\pm}$. Suppose that $\xi$ is an
element of the kernel of $D_u$, that is $D_u\xi=0$. For
$s\in\R$ write $\xi_s:=\xi(s,\cdot)\colon\SS^1\to\R$ in the form of
the Fourier series~(\ref{eq:Fourier-xi_s}).
Then $A_k(s)\equiv 0$ and $B_k(s)\equiv 0$ vanish identically for all
$k$ outside the interval $[k_+,k_-]$.
\end{proposition}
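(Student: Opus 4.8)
The plan is to run the argument of Proposition~\ref{prop:interval} almost verbatim; the one genuinely new point is to notice that, on the Fourier modes lying outside $[k_+,k_-]$, the kernel equation $D_u\xi=0$ is still a \emph{scalar, local} linear ODE. First I would record the structural form of the linearization: by formula~(\ref{eq:D_u-xi}), for every fixed $s$ the slice of $D_u\xi$ has the shape
\[
   (D_u\xi)_s=\p_s\xi_s-\p_t\p_t\xi_s+\alpha_s\xi_s+\lambda_s\,u_s,
\]
where $\lambda_s=-\tfrac{2}{\norm{u_s}^2}\bigl(\INNER{\p_su_s}{\xi_s}+(2\alpha_s-\norm{u_s}^{-6})\INNER{u_s}{\xi_s}\bigr)\in\R$ is a real scalar. (Any of the three equivalent forms in~(\ref{eq:D_u-xi}) will do, since each non-local correction term is a scalar multiple of the slice $u_s$.) By Proposition~\ref{prop:interval} the Fourier expansion~(\ref{eq:Fourier-u_s}) of $u_s$ involves only modes $k\in[k_+,k_-]$. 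Hence, projecting $D_u\xi=0$ onto a mode $k\notin[k_+,k_-]$ and using~(\ref{eq:Fourier-u_s}) and~(\ref{eq:Fourier-xi_s}), the term $\lambda_s u_s$ drops out and we are left with
\[
   A_k^\prime(s)+\bigl((2\pi k)^2+\alpha_s\bigr)A_k(s)=0,\qquad
   B_k^\prime(s)+\bigl((2\pi k)^2+\alpha_s\bigr)B_k(s)=0,
\]
which are exactly the ODE~(\ref{ref:ode}) satisfied by the Fourier coefficients of $u$ itself. (Parabolic regularity makes $\xi$ smooth, so $A_k,B_k\in C^\infty(\R)$; alternatively one bootstraps these ODEs directly.)

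With the local ODEs in hand I would copy the two-step comparison of Proposition~\ref{prop:interval}. A first order linear ODE enjoys the dichotomy that a solution is either identically zero or nowhere zero. Since $\xi$ lies in the domain~(\ref{eq:D_u-dom}) it satisfies $\xi_s\to0$ as $s\to\pm\infty$, hence $A_k(s),B_k(s)\to0$; on the other hand $u_s\to q_\pm$ as in~(\ref{eq:asymp-limit}), so $a_{k_\pm}(s)\to a_{k_\pm}$ and $b_{k_\pm}(s)\to b_{k_\pm}$ with $(a_{k_\pm},b_{k_\pm})\neq(0,0)$ by~(\ref{eq:q-shift}); fix one of these two that is nonzero, say $a_{k_+}\neq0$, which by the dichotomy forces $a_{k_+}(s)\neq0$ for all $s$. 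Suppose $k<k_+$ and, towards a contradiction, $A_k(0)\neq0$. Then
\[
   \frac{d}{ds}\Bigl(\ln\bigl(A_k(s)^2\bigr)-\ln\bigl(a_{k_+}(s)^2\bigr)\Bigr)
   =2(2\pi)^2\bigl(k_+^2-k^2\bigr)>0,
\]
so integrating from $0$ and exponentiating gives $A_k(s)^2/A_k(0)^2>a_{k_+}(s)^2/a_{k_+}(0)^2$ for $s>0$; letting $s\to+\infty$ the left side tends to $0$ while the right side tends to $a_{k_+}^2/a_{k_+}(0)^2>0$, a contradiction. Hence $A_k\equiv0$ for $k<k_+$. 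The symmetric computation with $k_-$ in place of $k_+$ and $s\to-\infty$ gives $A_k\equiv0$ for $k>k_-$, and $B_k$ is handled identically, comparing against whichever of $a_{k_\pm},b_{k_\pm}$ is nonzero. This yields $A_k\equiv B_k\equiv0$ for all $k\notin[k_+,k_-]$, which is the assertion.

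I expect the only subtle step to be the decoupling: one has to check in the explicit formula~(\ref{eq:D_u-xi}) that every term other than the ``heat part'' $\p_s-\p_t\p_t+\alpha_s$ is a scalar (albeit $\xi$-dependent) multiple of the slice $u_s$, so that it cannot inject into Fourier modes not already carried by $u$. Once that is seen, Proposition~\ref{prop:interval} collapses the kernel equation on the outer modes to the scalar ODE~(\ref{ref:ode}), and the rest is precisely the logarithmic-derivative estimate already carried out there. A minor point to get right is that the comparison partner $a_{k_+}(s)$ (or $b_{k_+}(s)$, or the corresponding one at $-\infty$) is nowhere vanishing, which again follows from the first order ODE dichotomy together with its nonzero asymptotic value.
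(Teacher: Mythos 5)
Your proposal is correct and follows essentially the same route as the paper: you observe that the non-local part of $D_u\xi$ is a scalar multiple of the slice $u_s$, which by Proposition~\ref{prop:interval} carries no Fourier modes outside $[k_+,k_-]$, so on those modes the kernel equation collapses to the scalar ODE~(\ref{ref:ode}) and the logarithmic-comparison argument applies verbatim. The paper phrases the decoupling by writing out the coupled ODE~(\ref{ref:ode-xi}) with the coefficient $(\Phi^*\xi_s)\cdot a_k(s)$ and noting $a_k\equiv 0$ off the interval, which is the same observation; your explicit use of the decay $A_k(s)\to 0$ forced by the weighted domain~(\ref{eq:D_u-dom}) makes precise a point the paper leaves implicit.
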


\begin{proof}
Pick a common Fourier mode $k\in\N_0$ of $u$ and $\xi$. Consider the
constants $a_k^\pm$ and $b_k^\pm$ defined by~(\ref{eq:a_k^pm})
and let $A_k^\pm$ and $B_k^\pm$ be defined analogously.
Taking one $s$ derivative and two $t$ derivatives of the Fourier
series~(\ref{eq:Fourier-xi_s}) the equation $D_u\xi=0$,
see~(\ref{eq:D_u-xi}), and the heat equation~(\ref{eq:heat}) for $u$
provide the ode
\begin{equation}\label{ref:ode-xi}
   A_k^\prime(s)+\left((2\pi k)^2 +\alpha_s\right) A_k(s)
   - (\Phi^*\xi_s)\cdot  a_k(s)
   =0
\end{equation}
for the function $A_k(s)$. Here the function $a_k(s)$ satisfies the
ode~(\ref{ref:ode}) and
\[
   \Phi^*\xi_s
   :=-\frac{2}{\norm{u_s}^2}
   \left(\INNER{\p_t\p_tu_s}{\xi_s}+\left(\alpha_s-\frac{1}{\norm{u_s}^6}\right)
   \INNER{u_s}{\xi_s}\right)
\]
Once we recall that for $k$ outside the interval $[k_+,k_-]$ the
functions $a_k\equiv0$ and $b_k\equiv0$ vanish identically,
the proof of the present proposition reduces to the one of
Proposition~\ref{prop:interval}. Indeed for $k\notin [k_+,k_-]$ the
ode~(\ref{ref:ode-xi}) reduces to the ode
\[
   A_k^\prime(s)+\left((2\pi k)^2 +\alpha_s\right) A_k(s)
   =0
\]
for $A_k(s)$. But this is exactly the ode~(\ref{ref:ode}) for which we already
showed the assertion.
The proof for $B_k(s)$ is analogous.
\end{proof}

\boldmath
\section{Restriction to 4-dimensional subspaces \boldmath$V_k$}
\unboldmath

Fix $k\in\N$ and define functions
\[
    \phi_k(t):=\cos 2\pi kt,\qquad
    \psi_k(t):=\sin 2\pi kt
\]
Consider the 4-dimensional vector subspace 
of the free loop space $W^{1,2}(\SS^1,\R)$
spanned by the following four functions (cf. Lemma~\ref{le:crit-pts-Bb})
\[
   V_k=\SPAN\{\phi_k,\psi_k,\phi_{k+1},\psi_{k+1}\},\qquad
   V_k^\times:=V_k\setminus\{0\}
\]

The following corollary tells that flow lines from $C_{k+1}$ to $C_k$
critical points lie in one and the same $V_k$.

\begin{figure}[h]
  \centering
  \includegraphics
                             {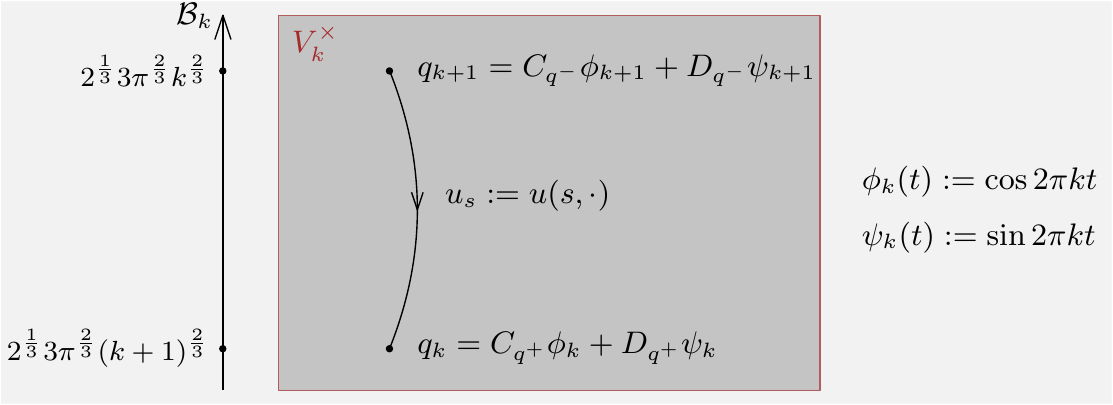}
  \caption{Flow lines connecting consecutive critical manifold components
                 $C_{k+1}$ and $C_k$ lie in a 4-dimensional space $V_k$}
   \label{fig:fig-gradline}
\end{figure} 

\begin{corollary}\label{cor:isolating}
Suppose $u\colon\R\times\SS^1\to\R$ is a gradient flow line
of $\Grad \Bb$, see~(\ref{eq:grad-Bb}), which asymptotically converges
to critical points lying in $V_k$.
Then the whole gradient flow line
$u_s:=u(s,\cdot)$ lies in $V_k^\times$ for all $s\in\R$.
\end{corollary}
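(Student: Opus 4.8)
The plan is to deduce the corollary directly from Proposition~\ref{prop:interval} together with the explicit description of the critical set in Lemma~\ref{le:crit-pts-Bb}; no new analysis of the heat flow is needed.

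First I would identify which critical manifolds meet $V_k$. By Lemma~\ref{le:crit-pts-Bb} every critical point of $\Bb$ is of the form $\sigma_*q_j$ for some $j\in\N$ and some $\sigma\in\SS^1$, and by~(\ref{eq:q-shift}) such a point equals $c_j\cos(2\pi j\sigma)\,\phi_j-c_j\sin(2\pi j\sigma)\,\psi_j$; that is, it involves only the frequency-$j$ Fourier functions $\phi_j,\psi_j$. Since the constant function together with $\{\phi_j,\psi_j:j\in\N\}$ is linearly independent in $W^{1,2}(\SS^1,\R)$, such a critical point lies in $V_k=\SPAN\{\phi_k,\psi_k,\phi_{k+1},\psi_{k+1}\}$ if and only if $j\in\{k,k+1\}$. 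Hence the hypothesis that $u$ converges to critical points lying in $V_k$ is exactly the statement that the positive integers $k_\pm$ in the asymptotic boundary conditions~(\ref{eq:asymp-limit}) satisfy $k_\pm\in\{k,k+1\}$.

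Next I would apply Proposition~\ref{prop:interval} to $u$. Note that a gradient flow line of $\Grad\Bb$ is precisely a solution of the delayed heat equation~(\ref{eq:heat}), and $u_s\neq0$ for every $s$ (otherwise $\alpha_s$ would be undefined). The proposition then says that the Fourier coefficients $a_j(s)$ and $b_j(s)$ of~(\ref{eq:Fourier-u_s}) vanish identically for every $j\notin[k_+,k_-]$; in particular the zeroth coefficient $a_0(s)$ vanishes, since $0<k_\pm$ (this is Step~1 of the proof of Proposition~\ref{prop:interval}). Because $k_\pm\in\{k,k+1\}$, the interval $[k_+,k_-]$ contains no integer other than possibly $k$ and $k+1$, so for every $s\in\R$ the Fourier series of $u_s$ collapses to
\[
   u_s=a_k(s)\,\phi_k+b_k(s)\,\psi_k+a_{k+1}(s)\,\phi_{k+1}+b_{k+1}(s)\,\psi_{k+1}\in V_k .
\]
Combined with $u_s\neq0$, this yields $u_s\in V_k^\times$ for all $s\in\R$, which is the claim.

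There is no genuine obstacle: the corollary is a bookkeeping consequence of Proposition~\ref{prop:interval}. The only points deserving a remark are (i) that the constant Fourier mode is killed along with the others, which is covered by Step~1 of Proposition~\ref{prop:interval} because $0<k_+$, and (ii) the degenerate case $k_+>k_-$, in which Proposition~\ref{prop:interval} would force every Fourier coefficient of $u$ to vanish, contradicting $u_s\neq0$; thus this case does not occur and does not affect the argument.
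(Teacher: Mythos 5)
Your proposal is correct and follows exactly the route the paper takes: the paper's proof of Corollary~\ref{cor:isolating} is simply the citation ``Proposition~\ref{prop:interval}'', and you have filled in the same bookkeeping (identifying that critical points in $V_k$ force $k_\pm\in\{k,k+1\}$, then invoking the vanishing of all Fourier modes outside $[k_+,k_-]$, including the constant mode). Your remarks on the constant mode and the degenerate case $k_+>k_-$ are correct and harmless additions.
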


\begin{proof}
Proposition~\ref{prop:interval}.
\end{proof}

In view of the above corollary we want to study in detail the
restriction of the functional
\[
   \Bb(q)
   ={\color{brown} 4 \norm{q}^2}\,\tfrac12\norm{\dot
   q}^2+\frac{1}{\norm{q}^2}
\]
to the pointed 4-dimensional subspace $V_k^\times$, notation
\[
   \Bb_k:=\Bb|_{V_k^\times}\colon V_k^\times\to(0,\infty)
\]

\begin{lemma}[Morse indices of the restricted functional]
\label{le:MI-B_k}
For $k\in\N$ it holds that $\Crit\Bb|_{V_k}\subset\Crit\,\Bb$ and that
\[
   \ind_{\Bb|_{V_k}}(q_{k+1})=2
   ,\qquad
   \ind_{\Bb|_{V_k}}(q_k)=0
\]
\end{lemma}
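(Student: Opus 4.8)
The plan is to put $\Bb_k$ into a simple normal form using the linear coordinates on $V_k$. Write an arbitrary loop in $V_k$ as $q = x_1\phi_k + x_2\psi_k + x_3\phi_{k+1} + x_4\psi_{k+1}$ and set $a := x_1^2+x_2^2$, $b := x_3^2+x_4^2$. Since $\phi_k,\psi_k,\phi_{k+1},\psi_{k+1}$ are pairwise $L^2$-orthogonal with $\norm{\phi_j}^2 = \norm{\psi_j}^2 = \tfrac12$, and since $\dot\phi_j = -2\pi j\,\psi_j$ and $\dot\psi_j = 2\pi j\,\phi_j$, one gets $\norm{q}^2 = \tfrac12(a+b)$ and $\norm{\dot q}^2 = \tfrac12(Ka + Lb)$, where $K := (2\pi k)^2 < L := (2\pi(k+1))^2$. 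Hence $\Bb_k = g(a,b)$ with $g(a,b) := \tfrac12(a+b)(Ka+Lb) + \tfrac{2}{a+b}$, a function smooth on $V_k^\times = \{q\ne 0\} = \{a+b>0\}$. First I would record this normal form together with $g_a = Ka + \tfrac{K+L}{2}b - \tfrac{2}{(a+b)^2}$ and $g_b = Lb + \tfrac{K+L}{2}a - \tfrac{2}{(a+b)^2}$.

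Next I would determine $\Crit\Bb_k$. By the chain rule $\nabla\Bb_k(x) = 2(x_1 g_a,\, x_2 g_a,\, x_3 g_b,\, x_4 g_b)$, which vanishes iff for each of the two coordinate blocks either the block is zero or the matching partial of $g$ vanishes; and $q\ne 0$ forbids both blocks being zero. The remaining "mixed" possibility $g_a = g_b = 0$ would force $(K-L)(a+b) = 0$, which is impossible since $K\ne L$ and $a+b>0$. Thus every critical point of $\Bb_k$ has exactly one vanishing block, and the surviving block satisfies $Ka_* = 2/a_*^2$ (block two zero) or $Lb_* = 2/b_*^2$ (block one zero), i.e. $a_* = c_k^2$ resp. $b_* = c_{k+1}^2$ with $c_k,c_{k+1}$ as in Lemma~\ref{le:crit-pts-Bb}. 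The two resulting critical circles are exactly $C_k$ and $C_{k+1}$; in particular $\Crit\Bb|_{V_k}\subset\Crit\,\Bb$.

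Finally the indices, read off from the Hessian of $\Bb_k = g(x_1^2+x_2^2,\,x_3^2+x_4^2)$ at the representatives $q_k = c_k\phi_k = (c_k,0,0,0)$ and $q_{k+1} = c_{k+1}\phi_{k+1} = (0,0,c_{k+1},0)$; note that the index of a restricted functional at a critical point does not depend on the ambient metric, so the Euclidean Hessian is legitimate. At $q_k$ every mixed second partial contains a factor $x_2$, $x_3$ or $x_4$ and hence vanishes, so the Hessian is diagonal with entries $2g_a + 4a_*g_{aa}$, $2g_a$, $2g_b$, $2g_b$ evaluated at $(a_*,0)$. Using criticality $g_a(a_*,0) = 0$, then $g_{aa}(a_*,0) = K + 4/a_*^3 = 3K > 0$ (as $a_*^3 = 2/K$) and $g_b(a_*,0) = \tfrac{K+L}{2}a_* - Ka_* = \tfrac{L-K}{2}a_* > 0$, the spectrum is $(+,0,+,+)$, the lone zero being tangent to $C_k$; hence $\ind_{\Bb|_{V_k}}(q_k) = 0$. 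At $q_{k+1}$ the computation is the mirror image under $a\leftrightarrow b$, $K\leftrightarrow L$: the Hessian is again diagonal with $g_{bb}(0,b_*) = 3L > 0$, but now $g_a(0,b_*) = \tfrac{K+L}{2}b_* - Lb_* = \tfrac{K-L}{2}b_* < 0$, so the spectrum is $(-,-,+,0)$ and $\ind_{\Bb|_{V_k}}(q_{k+1}) = 2$.

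All computations here are elementary; the only point needing care is the bookkeeping near the degenerate blocks ($a=0$ or $b=0$), where the "radial" description breaks down. One must check that $q_k$ and $q_{k+1}$ are genuine smooth points of $V_k^\times$ and that in each case the Hessian kernel is exactly one-dimensional, namely the tangent line to the relevant critical circle. The diagonal form of the Hessian obtained above makes both of these transparent, so I do not anticipate a real obstacle; the substance of the proof is the normal form $\Bb_k = g(a,b)$ and the sign $K < L$, which is what flips two eigenvalues negative at $q_{k+1}$ while leaving all of them positive at $q_k$.
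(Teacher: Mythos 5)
Your proof is correct, but it travels a genuinely different road than the paper's. The paper disposes of this lemma in one line by pointing at the appendix, where the full Hessian operator $A_{\sigma_*q_j}$ on $W^{2,2}(\SS^1,\R)$ is diagonalized in a shifted Fourier basis, see~(\ref{eq:coeffs}) and~(\ref{eq:eigenvalues}): since $V_k$ is an invariant subspace spanned by eigenvectors, the restricted index at $q_k$ is the number of negative values among $12(2\pi k)^2$, $0$, and $4\pi^2\bigl((k+1)^2-k^2\bigr)$ (the latter twice), namely $0$, while at $q_{k+1}$ the modes $\phi_k,\psi_k$ each contribute the negative eigenvalue $4\pi^2\bigl(k^2-(k+1)^2\bigr)$, giving index $2$. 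You instead never leave $V_k$: the rotational reduction $\Bb_k=g(a,b)$ with $g(a,b)=\tfrac12(a+b)(Ka+Lb)+\tfrac{2}{a+b}$ turns the whole lemma into two-variable calculus, and the sign flip is isolated in $g_a(0,b_*)=\tfrac{K-L}{2}b_*<0$ versus $g_b(a_*,0)=\tfrac{L-K}{2}a_*>0$. Your route is more elementary and self-contained, and it delivers as a by-product both the inclusion $\Crit\Bb|_{V_k}\subset\Crit\,\Bb$ and the identification $\Crit\Bb_k=C_k\cup C_{k+1}$ together with the values $a_*=c_k^2$, $b_*=c_{k+1}^2$ --- facts the paper asserts and uses in Section~\ref{sec:cascade} but whose justification is left to the invariance of $V_k$ under $q\mapsto-\ddot q+\alpha q$. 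The paper's route costs nothing extra because the appendix computation is needed anyway (for the ambient indices $2k-1$, the Morse--Bott property, and the triviality of the negative bundles), and it makes transparent that the restricted Hessian is just the compression of the ambient one to an invariant subspace. Your one soft step --- replacing the $L^2_q$ Hessian by the Euclidean one in coordinates --- is legitimate and correctly flagged, since the signature of the second-derivative form at a critical point is metric-independent; your verification that the kernel is exactly the tangent line to the critical circle closes the only remaining gap.
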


\begin{proof}
This follows from the computation of
the eigenvalues and eigenvectors in~(\ref{eq:coeffs}).
\end{proof}

Note that there are no constant functions on $V_k^\times$ and
therefore $\norm{\dot q}^2$ is bounded away from zero.
Therefore the restriction of $\Bb$ to $V_k^\times$
goes to infinity when $\norm{q}$ moves to infinity or to zero.
In particular, the restriction of $\Bb$ to $V_k^\times$
is a coercive function (pre-images of compacta are compact).
Therefore Morse homology of the coercive functional $\Bb$
represents singular homology of the domain $V_k^\times$ of $\Bb$.
But $V_k^\times$ is homotopy equivalent to the $3$-sphere $\SS^3$.
We summarize these findings in

\begin{lemma}[Morse complex of the restriction $\Bb_k\colon V_k\to\R$]
\label{le:S3}
For $k\in\N$ it holds
\[
   \HM_*(\Bb_k;\Z_2)
   \simeq\Ho_*(V_k^\times;\Z_2)
   \simeq\Ho_*(\SS^3;\Z_2)
   =
   \begin{cases}
      \Z_2&\text{, $*=0,3$}
      \\
      0&\text{, else}
   \end{cases}
\]
\end{lemma}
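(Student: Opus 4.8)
The plan is to read the claimed formula as a chain of three isomorphisms, $\HM_*(\Bb_k;\Z_2)\simeq\Ho_*(V_k^\times;\Z_2)\simeq\Ho_*(\SS^3;\Z_2)$, of which the last two are elementary topology and only the first requires work — and even that work is just finite-dimensional Morse--Bott theory. First I would observe that $V_k$ is invariant under the negative $L^2_q$ gradient flow: for $q\in V_k$ one has $\Grad\Bb(q)=-\ddot q+\alpha_q q\in V_k$ because $\ddot\phi_j=-(2\pi j)^2\phi_j$, $\ddot\psi_j=-(2\pi j)^2\psi_j$ and $\alpha_q\in\R$; moreover, on the pointed space $V_k^\times$ the $L^2_q$ inner product $4\norm{q}^2\inner{\cdot}{\cdot}$ is a genuine smooth Riemannian metric, being positive definite since $\norm q>0$ there. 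Hence the restricted negative gradient flow equals the negative gradient flow of $\Bb_k=\Bb|_{V_k^\times}$ for this metric, and by Corollary~\ref{cor:isolating} these are exactly the flow lines of $\Grad\Bb$ running between critical points lying in $V_k$.

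Next I would verify the hypotheses needed to run Morse--Bott homology on the four-manifold $V_k^\times$. Coercivity of $\Bb_k$ --- preimages of compacta are compact, i.e.\ $\Bb_k(q)\to\infty$ as $\norm q\to0$ or $\norm q\to\infty$ --- was established in the paragraph preceding the statement; in particular $\Bb_k$ is bounded below, and, being a coercive function on a finite-dimensional manifold, it satisfies the Palais--Smale condition, while its negative gradient flow has compact sublevel sets and is forward complete (so no escape to the two ends $0$ and $\infty$). By Lemma~\ref{le:crit-pts-Bb} together with the eigenvalue/eigenvector computation~(\ref{eq:coeffs}) underlying Lemma~\ref{le:MI-B_k}, $\Bb_k$ is Morse--Bott with critical set exactly the two circles $C_k$ and $C_{k+1}$, of nullity $1$ along each (the single zero eigenvalue being the $\SS^1$-direction tangent to the critical orbit), with Morse--Bott indices $0$ at $C_k$ and $2$ at $C_{k+1}$. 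I would then choose on each circle an auxiliary Morse function with one maximum and one minimum and, by a routine Sard--Smale argument on $V_k^\times$, perturb so that the resulting cascade flow is Morse--Smale; the cascade complex $\CM_*(\Bb_k,b;\Z_2)$ is then defined.

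The first isomorphism now follows from the fundamental theorem of cascade Morse--Bott homology: for a bounded-below Palais--Smale Morse--Bott function on a finite-dimensional manifold, the cascade complex with generic auxiliary Morse--Smale data computes the singular homology of the manifold; see Frauenfelder~\cite{Frauenfelder:2004a}. Hence $\HM_*(\Bb_k;\Z_2)\simeq\Ho_*(V_k^\times;\Z_2)$. For the remaining steps, $V_k\cong\R^4$ as a real vector space, so $V_k^\times=V_k\setminus\{0\}$ deformation retracts radially onto the unit $L^2$-sphere of $V_k$, which is diffeomorphic to $\SS^3$; therefore $\Ho_*(V_k^\times;\Z_2)\simeq\Ho_*(\SS^3;\Z_2)$, and the latter equals $\Z_2$ in degrees $0$ and $3$ and vanishes otherwise, which is the displayed formula.

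The only genuinely substantive point is the middle one: that passing from $\Bb$ to $\Bb|_{V_k^\times}$ preserves the Morse--Bott property and does not create spurious critical orbits, the transverse nondegeneracy having to survive the restriction --- and this is exactly what the spectral computation~(\ref{eq:coeffs})/Lemma~\ref{le:MI-B_k} delivers; the abstract cascade-homology isomorphism and the homology of $\SS^3$ are standard. I would also flag, although it is not needed for the present lemma, that for the later global argument (Proposition~\ref{prop:cascade-complex}) the auxiliary Morse function on $C_k$ must ultimately be chosen so that the Morse--Smale condition holds simultaneously on $V_k^\times$ and on the ambient space $W^{1,2}_\times$; within this lemma any generic choice suffices.
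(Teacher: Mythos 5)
Your proposal is correct and follows essentially the same route as the paper, whose own justification is the paragraph preceding the lemma: coercivity of $\Bb_k$ on the finite-dimensional manifold $V_k^\times$ implies that (cascade) Morse homology computes singular homology, and $V_k^\times$ is homotopy equivalent to $\SS^3$. Your additional verifications (flow-invariance of $V_k$, the Morse--Bott property and indices via~(\ref{eq:coeffs}), and the remark about choosing the auxiliary data compatibly with the ambient space) merely make explicit what the paper leaves implicit or defers to Lemma~\ref{le:MI-B_k} and Section~\ref{sec:cascade}.
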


\boldmath
\section{Construction of a cascade Morse complex for~\boldmath$\Bb$}\label{sec:cascade}
\unboldmath

We choose on each critical manifold $C_k$ a point
$m_k$, that is for each $k\in\N$.
We consider the unstable manifold 
of $m_{k+1}$ with respect to the restriction $\Bb_k$ of
$\Bb$ to $V_k^\times$, notation $W^u_{-\nabla\Bb_k}(m_{k+1})$.

Since the Morse index of $\Bb_k$ along $C_{k+1}$ is 2,
this unstable manifold is a 2-dimensional sub-manifold of $V_k$.
Since $\Bb_k$ is coercive each point $p\not= m_{k+1}$ of the unstable
manifold $W^u_{-\nabla\Bb_k}(m_{k+1})$ converges under the negative
gradient flow of $\Bb_k$ in positive time to a point $y$ on $C_k$.
Hence we obtain a well defined \textbf{evaluation map} given by
\[
   \ev\colon W^u_{-\nabla\Bb_k}(m_{k+1})\setminus\{m_{k+1}\}\to C_k
   ,\quad
   p\mapsto \lim_{t\to+\infty} \varphi^t_{-\nabla\Bb_k} (p)
\]
We choose a regular value of $\ev$ different from $m_k$,
notation $M_k$.

On each $C_k$ (it is diffeomorphic to $\SS^1$)
we choose a Morse function $b_k$ with exactly two critical points,
namely a maximum at $M_k$ and a minimum at $m_k$.
Let $b$ denote the resulting Morse function
on the set $C=\cup_k C_k$ of critical points of $\Bb$.
Note that the Morse index of the critical points is zero or one, namely
$\ind_b(m_k)=0$ and $\ind_b(M_k)=1$.

Hence in view of~(\ref{eq:ind}) for the \textbf{cascade index}
$\ind_{(\Bb,b)}$ we obtain
\begin{equation}\label{eq:casc-ind-1}
\begin{split}
   \ind_{(\Bb,b)}(M_k)
   &:=\ind_\Bb(M_k)+\ind_{b}(M_k)=2k
   \\
   \ind_{(\Bb,b)}(m_k)
   &:=\ind_\Bb(m_k)+\ind_{b}(m_k)=2k-1
\end{split}
\end{equation}
From $M_k$ to $m_k$ there are 2 gradient flow lines of $b$ and since
we count modulo 2 we have for the Morse boundary operator
\begin{equation}\label{eq:p-M_k}
   \p M_k=0
\end{equation}
It remains to compute $\p m_{k+1}$.
Before we can do that we have to make sure that we have a well defined
count of cascades from $m_{k+1}$ to $M_k$.
\\
Hence we consider a gradient flow line $u$ of $\Bb$ from
$m_{k+1}$ to $M_k$ and we need to show that $D_u$ is surjective.
In view of these specific asymptotic boundary conditions,
we know by Proposition~\ref{prop:interval} that
$s\mapsto u_s:=u(s,\cdot)$ takes values in $V_k$.
We consider the restriction of $D_u$ to $V_k$ as an operator
\[
   D_u|_{V_k}\colon W^{1,2}_\delta(\R,V_k)\to W^{0,2}_\delta(\R,V_k)
\]
It follows by Proposition~\ref{prop:interval-eta}
\begin{equation}\label{eq:ker-D_u-restr}
   \ker D_u=\ker D_u|_{V_k}
\end{equation}
Since $M_k$ was chosen as a regular value of the evaluation map $\ev$
we have
\begin{equation}\label{eq:index-D_u-restr}
   \dim\ker D_u|_{V_k}=\INDEX(D_u|_{V_k})
\end{equation}
Furthermore, it is well known that the Fredholm index of the linearization
is given by the cascade index difference of the asymptotic boundary
conditions and this shows the first and the final identity in the following
\begin{equation}\label{eq:index-2}
\begin{split}
   \INDEX(D_u|_{V_k})
   &=\ind_{(\Bb_k,b)}(m_{k+1})-\ind_{(\Bb_k,b)}(M_k)\\
   &=\ind_{\Bb_k}(m_{k+1})+\ind_{b}(m_{k+1})
      -\left(\ind_{\Bb_k}(M_k)+\ind_{b}(M_k)\right)\\
   &=2+0-(0+1)=1\\
   &=\left(2(k+1)-1\right)-2k\\
   &=\ind_{(\Bb,b)}(m_{k+1})-\ind_{(\Bb,b)}(M_k)\\
   &=\INDEX(D_u)
\end{split}
\end{equation}
Here equality two is by definition of the cascade index,
equality three is by Lemma~\ref{le:MI-B_k}, and the penultimate
equality is by~(\ref{eq:casc-ind-1}).

Summarizing, apply successively the
results~(\ref{eq:ker-D_u-restr}),~(\ref{eq:index-D_u-restr}),
and~(\ref{eq:index-2}) to obtain
\begin{equation*}
\begin{split}
   \dim\ker D_u
   &=\dim \ker D_u|_{V_k}\\
   &=\INDEX(D_u|_{V_k})\\
   &=\INDEX(D_u)\\
   :&=\dim\ker D_u-\dim\coker D_u
\end{split}
\end{equation*}
Hence $\coker D_u$ is trivial and therefore the linearized operator
$D_u$ is surjective.

\section{Proof of the main theorem}\label{sec:proof}

\begin{proposition}[Cascade chain complex]\label{prop:cascade-complex}
The cascade chain groups of the Morse-Bott functional $\Bb$
and with respect to the auxiliary Morse function $b$ on
$C:=\Crit\,\Bb$ carefully chosen in Section~\ref{sec:cascade}
are given by
\begin{equation}\label{eq:CM_ell}
   \CM_\ell(\Bb,b;\Z_2)
   =
   \begin{cases}
     \Z_2 \orient{M_k}&\text{, $\ell=2k$}
     \\
     Z_2 \orient{m_k}&\text{, $\ell=2k-1$}
   \end{cases}
\end{equation}
whenever $\ell\in\N$ and they are zero else. All maxima are cycles
\begin{equation}\label{eq:p-M}
   \p M_k=0,\quad M_k=\p m_{k+1},\quad k\in\N
\end{equation}
but also boundaries. There is exactly one more cycle,
the lowest minimum 
\begin{equation}\label{eq:p-m_1}
   \p m_1=0
\end{equation}
and $m_1$ is not a boundary. Thus $m_1$ generates the Morse homology.
\end{proposition}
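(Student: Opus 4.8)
The plan is to assemble the proposition from the pieces already established in Sections~\ref{sec:cascade} and before. The chain-group formula~(\ref{eq:CM_ell}) is immediate from the cascade index computation~(\ref{eq:casc-ind-1}): on each circle $C_k$ the auxiliary Morse function $b_k$ has exactly one maximum $M_k$ (of $b$-index $1$) and one minimum $m_k$ (of $b$-index $0$), and combining with the Morse-Bott index $2k-1$ from~(\ref{eq:ind}) we get cascade indices $2k$ and $2k-1$ respectively. Since the $C_k$ for $k\in\N$ exhaust $\Crit\,\Bb$ by Lemma~\ref{le:crit-pts-Bb}, this accounts for exactly one generator in each degree $\ell\in\N$ and none in degrees $\le 0$.

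\textbf{Next I would compute the boundary operator.} The identity $\p M_k=0$ is~(\ref{eq:p-M_k}): the only cascades out of $M_k$ in degree one lower are the two (unparametrized) gradient flow lines of $b_k$ on $C_k$ from $M_k$ to $m_k$, and mod $2$ these cancel. For $\p m_{k+1}$, the only possible target in the right degree is $M_k$, so I must count cascades from $m_{k+1}$ to $M_k$. By Corollary~\ref{cor:isolating} every gradient flow line of $\Grad\Bb$ asymptotic to critical points in $V_k$ in fact lies in $V_k^\times$, and the asymptotics $m_{k+1},M_k$ lie on $C_{k+1},C_k\subset V_k$; moreover the surjectivity of $D_u$ for such $u$ was verified at the end of Section~\ref{sec:cascade} precisely by comparing with the restricted operator $D_u|_{V_k}$. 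Hence the cascade count from $m_{k+1}$ to $M_k$ for $\Bb$ agrees with the one for the restricted functional $\Bb_k$ on $V_k^\times$. Now invoke Lemma~\ref{le:S3}: the cascade Morse homology of $\Bb_k$ is $\Ho_*(\SS^3;\Z_2)$, which vanishes in degree $1$ and $2$. In the cascade complex of $\Bb_k$ the generator $m_{k+1}$ sits in degree $\ind_{(\Bb_k,b)}(m_{k+1})=2$ and $M_k$ in degree $\ind_{(\Bb_k,b)}(M_k)=1$ (by the computation in~(\ref{eq:index-2})), and these are the only two generators of $\Bb_k$'s complex; since $\Ho_1=\Ho_2=0$ the single arrow $\p_{\Bb_k} m_{k+1}=M_k$ must be an isomorphism, i.e.\ the mod-$2$ count is odd. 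Transporting back via the identification of cascade spaces, $\p m_{k+1}=M_k$ in the complex $\CM_*(\Bb,b;\Z_2)$, which is the second assertion of~(\ref{eq:p-M}).

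\textbf{The conceptual subtlety}, rather than a computational obstacle, is making sure the two cascade complexes really are compatible: that the auxiliary Morse function $b$ can be chosen so the Morse--Smale condition holds simultaneously on $V_k^\times$ and on the full space (so that $\Bb_k$'s cascade complex is literally a subcomplex of $\Bb$'s up to the degree shift by $2k-1$), and that $M_k$ being a regular value of the evaluation map $\ev$ forces the relevant moduli spaces to be cut out transversally. This was arranged in Section~\ref{sec:cascade} via the choice of $M_k$ as a regular value of $\ev$ and of $b_k$ accordingly, and is exactly what makes $\dim\ker D_u=\INDEX(D_u)$ hold there; I would simply cite that discussion.

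\textbf{Finally I would draw the homology conclusion.} The complex $(\CM_*(\Bb,b;\Z_2),\p)$ thus has, in each degree pair $(2k,2k-1)$ with $k\ge 2$, the single arrow $\p m_k=M_{k-1}$, an isomorphism $\Z_2\to\Z_2$; and additionally $\p M_k=0$ for all $k\ge 1$ and $\p m_1=0$, the latter because $m_1$ has degree $1$ and there is no generator in degree $0$. So every $M_k$ ($k\ge 1$) is a cycle and equals $\p m_{k+1}$, hence a boundary; every $m_{k+1}$ ($k\ge 1$) is not a cycle; and $m_1$ is a cycle that is not a boundary since nothing maps onto it. Therefore $\ker\p/\im\p$ is generated by the class of $m_1$ and concentrated in degree $1$, giving $\HM_*(\Bb;\Z_2)=\Z_2$ for $*=1$ and $0$ otherwise, which is precisely Theorem~\ref{thm:main}. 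This completes the proof of the proposition.
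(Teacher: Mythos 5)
Your proposal is correct and follows essentially the same route as the paper's own proof: the chain groups from the cascade indices~(\ref{eq:casc-ind-1}), $\p M_k=0$ from the two flow lines of $b_k$ on the circle, and $\p m_{k+1}=M_k$ deduced by combining Corollary~\ref{cor:isolating} (identification of the flow lines of $\Bb$ with those of $\Bb_k$) with Lemma~\ref{le:S3} (vanishing of the cascade homology of $\Bb_k$ in degrees $1$ and $2$, forcing an odd count). Your added remarks on the simultaneous Morse--Smale condition and the regular-value choice of $M_k$ correctly locate where the paper's Section~\ref{sec:cascade} does that work.
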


\begin{proof}[Proof of Proposition~\ref{prop:cascade-complex}]
Assertion~(\ref{eq:CM_ell}) follows from~(\ref{eq:casc-ind-1}).
The first equation in~(\ref{eq:p-M})
follows from~(\ref{eq:p-M_k}).

It remains to prove the second equation $\p m_{k+1}=M_k$
in~(\ref{eq:p-M}). In order to do that we consider the cascade
complex of the restriction $\Bb_k$ of the functional $\Bb$ to
the 4-dimensional space $V_k^\times=V_k\setminus\{0\}$.
\\
The cascade complex of the pair $(\Bb_k,b)$
has four generators, namely $M_{k+1},m_{k+1},M_k,m_k$.
By Lemma~\ref{le:MI-B_k} it holds that
\begin{equation*}
\begin{split}
   \IND_{(\Bb_k,b)}(M_{k+1}):=\IND_{\Bb_k}(M_{k+1})+\IND_{b}(M_{k+1})
   &=2+1=3
\\
   \IND_{(\Bb_k,b)}(m_{k+1}):=\IND_{\Bb_k}(m_{k+1})+\IND_{b}(m_{k+1})
   &=2+0=2
\\
   \IND_{(\Bb_k,b)}(M_{k}):=\IND_{\Bb_k}(M_{k})+\IND_{b}(M_{k})
   &=0+1=1
\\
   \IND_{(\Bb_k,b)}(m_{k}):=\IND_{\Bb_k}(m_{k})+\IND_{b}(m_{k})
   &=0+0=0
\end{split}
\end{equation*}
According to Lemma~\ref{le:S3}
the cascade homology of $(\Bb_k,b)$ on the 4-dimensional space
$V_k^\times$ vanishes in degrees 1 and 2.
Therefore there has to exist an odd number of gradient flow lines
of the restricted functional $\Bb_k$ from $m_{k+1}$ to $M_k$.
According to Corollary~\ref{cor:isolating}
these are precisely the gradient flow lines of the unrestricted
functional $\Bb$ from $m_{k+1}$ to $M_k$.
Therefore $\p m_{k+1}=M_k$.

Because there are no generators of degree lower than
the degree one of $m_1$, it holds that $\p m_1=0$.
\end{proof}

\appendix

\section{Morse-Bott and trivial negative bundles}\label{sec:TNB}

The connected components of the critical
manifold $C:=\Crit\,\Bb$ consist of circles $C_k$
labelled by $k\in\N$.
In~\cite{Frauenfelder:2021a} we already showed that
the kernel of the Hessian of $\Bb$ at each point of $C_k$
is $1$-dimensional. Since the kernel always
contains the tangent space to the critical manifold which in our case
is of dimension one, the two are equal. But this is the definition of
\textbf{Morse-Bott}.
Thus from~\cite{Frauenfelder:2021a} we know that
$C_k\simeq \SS^1$ is Morse-Bott of index $2k-1$.
Since $C_k$ is Morse-Bott there is the splitting
\[
   T_{C_k} W^{1,2}_\times
   =TC_k\oplus\Vv^- C_k\oplus \Vv^+ C_k
\]
which at each point corresponds to the splitting
in zero/negative/positive eigenspaces of the Hessian.
Note that the rank of $\Vv^- C_k$ corresponds to the Morse index of $C_k$.
The above argument proves the following lemma.

\begin{lemma}[Morse-Bott functional]\label{le:Morse-Bott}
The functional $\Bb$ defined by~(\ref{eq:Bb}) is Morse-Bott
and every critical point is of nullity 1.
\end{lemma}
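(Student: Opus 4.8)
The plan is to reduce the statement to a one-mode Fourier computation. By Lemma~\ref{le:crit-pts-Bb} (established in~\cite{Frauenfelder:2021a}) we have $\Crit\,\Bb=\bigsqcup_{k\in\N}C_k$, where $C_k=\{\sigma_*q_k:\sigma\in\SS^1\}$ is the time-translation orbit of $q_k$; it is a circle, and differentiating the orbit map $\sigma\mapsto\sigma_*q$ gives $T_qC_k=\R\dot q$ at each $q\in C_k$. So ``Morse--Bott of nullity $1$'' amounts to the claim that the Hessian $A_q:=D(\Grad\Bb)(q)$ --- the self-adjoint $L^2_q$-linearization of~\eqref{eq:grad-Bb}, whose kernel equals the (metric-independent) null space of the Hessian bilinear form --- satisfies $\ker A_q=T_qC_k$ at each such $q$. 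One inclusion is free: $\Grad\Bb$ vanishes identically along $C_k$, so $A_q\dot q=\tfrac{d}{d\sigma}\big|_{0}\Grad\Bb(\sigma_*q)=0$ and $T_qC_k\subseteq\ker A_q$. The entire content is the reverse bound $\dim\ker A_q\le1$.

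First I would compute $A_q$ explicitly. By the $\SS^1$-equivariance of $\Bb$ the operators $A_{\sigma_*q_k}$ are mutually conjugate by translation, so it is enough to take $q=q_k$. Linearizing $\Grad\Bb(q)=-\ddot q+\alpha_q q$ --- that is, differentiating $\alpha_q=\norm{\dot q}^2/\norm{q}^2-1/(2\norm{q}^6)$, equivalently specializing~\eqref{eq:D_u-xi} to the constant cylinder $u_s\equiv q_k$ and dropping the $\p_s$-term --- and simplifying with the critical-point relation $\ddot q_k=-(2\pi k)^2q_k$ (so that $\inner{\dot q_k}{\dot\xi}=(2\pi k)^2\inner{q_k}{\xi}$ and $\norm{\dot q_k}^2=(2\pi k)^2\norm{q_k}^2$), one obtains
\[
   A_{q_k}\xi=-\ddot\xi-(2\pi k)^2\xi+\frac{3\,\inner{q_k}{\xi}}{\norm{q_k}^8}\,q_k,
\]
a rank-one perturbation of $-\p_t^2-(2\pi k)^2$ whose range lies in $\R q_k$.

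Then I would diagonalize in Fourier modes. Since $q_k$ is the pure mode $\phi_k=\cos2\pi kt$, the rank-one term is supported there, and $A_{q_k}$ respects the splitting $L^2(\SS^1)=\SPAN\{\phi_k,\psi_k\}\oplus(\text{modes }j\neq k)$. On the modes $j\neq k$ it acts by multiplication by $4\pi^2(j^2-k^2)\neq0$, contributing the constant mode and the modes $j=1,\dots,k-1$ as a $(2k-1)$-dimensional negative eigenspace and no kernel; on $\SPAN\{\phi_k,\psi_k\}$ it annihilates $\psi_k=\sin2\pi kt$ (both summands vanish because $\inner{q_k}{\psi_k}=0$) and sends $\phi_k$ to a positive multiple of itself. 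As $\psi_k$ is a nonzero multiple of $\dot q_k$, this gives $\ker A_{q_k}=\R\psi_k=T_{q_k}C_k$, of dimension exactly $1$; hence $\Bb$ is Morse--Bott of nullity $1$, and as a by-product the Morse index is $2k-1$, reconfirming~\eqref{eq:ind}.

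The only real obstacle is the bookkeeping in the middle step: one must differentiate $\alpha_q$ carefully and verify that, after imposing $\ddot q_k=-(2\pi k)^2q_k$, every $\inner{\dot q_k}{\dot\xi}$-type term collapses to a multiple of $\inner{q_k}{\xi}\,q_k$, so the correction really is rank one into $\R q_k$. One could also skip the computation and simply invoke the fact --- proved in~\cite{Frauenfelder:2021a} and recalled in the discussion above --- that $\ker A_q$ is $1$-dimensional; I would still prefer the Fourier argument, since it simultaneously re-derives the Morse index $2k-1$.
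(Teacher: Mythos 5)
Your proposal is correct and follows essentially the same route as the paper: the paper's stated proof defers the $1$-dimensionality of $\ker A_q$ to~\cite{Frauenfelder:2021a} and then identifies the kernel with $T_qC_k$, while its appendix carries out exactly your computation --- the explicit rank-one Hessian (your coefficient $3/\norm{q_k}^8=48/c_k^8$ agrees with the paper's $12(2\pi k)^2\xi_k$ term) and its Fourier diagonalization, with kernel spanned by $\sin 2\pi k(t+\sigma)\propto\dot q$. The only cosmetic difference is that you reduce to $\sigma=0$ by equivariance, whereas the paper keeps $\sigma$ general via a shifted Fourier basis because it also wants triviality of the negative bundle.
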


In this section we show additionally that the negative bundle $\Vv^- C_k$
is trivial for each $C_k$.
This plays an important role in order to compute Conley indices
of the critical components $C_k$.

\begin{lemma}[Trivial negative normal bundles]\label{le:neg-bnorm-bdl}
For each $k\in\N$
the negative normal bundle $\Vv^- C_k$ 
over the Morse-Bott manifold $C_k$
is a) trivial and b) of rank $2k-1$.
\end{lemma}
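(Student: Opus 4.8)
The rank computation in part b) is immediate: the rank of $\Vv^- C_k$ is by definition the Morse index of $C_k$, which equals $2k-1$ by equation~(\ref{eq:ind}) of Lemma~\ref{le:crit-pts-Bb}. So the whole content is part a), triviality. Since $C_k \cong \SS^1$, a real vector bundle over $C_k$ is trivial if and only if it is orientable (its classifying map $\SS^1 \to BO(r)$ is null-homotopic iff the $w_1$ obstruction vanishes), so it suffices to exhibit either a continuous nonvanishing section of the top exterior power $\Lambda^{2k-1}\Vv^- C_k$, or — more concretely — a continuous frame of $\Vv^- C_k$ along all of $C_k$. I would go the concrete route.

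\textbf{Key steps.} First I would diagonalize the Hessian $A_{\sigma_* q_k}$ at each critical point $\sigma_* q_k$ explicitly, using the Fourier basis. Because the critical point $\sigma_* q_k(t) = c_k\cos 2\pi k(t+\sigma)$ is a single Fourier mode, the linearization~(\ref{eq:grad-Bb}) plus the rank-one correction terms coming from the nonlocal part (as in~(\ref{eq:D_u-xi}), restricted to the stationary case) act \emph{block-diagonally} on the Fourier modes: the operator $\xi \mapsto -\ddot\xi + \alpha\xi + (\text{nonlocal terms})$ couples only the $\cos 2\pi kt$ and $\sin 2\pi kt$ components among themselves (via the $\inner{u_s}{\xi_s}$ and $\inner{\ddot u_s}{\xi_s}$ pairings), and acts diagonally as $-\ddot{} + \alpha$ on every other mode. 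So the eigenvalues are $(2\pi j)^2 + \alpha$ for $j \neq k$ (each with multiplicity $2$, eigenvectors $\cos 2\pi jt$, $\sin 2\pi jt$), together with two more eigenvalues from the $2\times 2$ block spanned by $\{\cos 2\pi kt, \sin 2\pi kt\}$ — one of which is $0$ (the tangent direction to $C_k$, i.e. $\partial_\sigma(\sigma_* q_k)$) and one of which must be negative for the index count to come out to $2k-1$. Here I would invoke the eigenvalue/eigenvector computation already referenced as~(\ref{eq:coeffs})/(\ref{eq:eigenvalues}) rather than redo it.

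Second, with $\alpha = \alpha_{q_k} < 0$ pinned down (indeed $\alpha_{q_k} = -(2\pi k)^2$ since the tangent direction, which is again a pure $k$-mode, must lie in the kernel), the negative eigenvalues are exactly $(2\pi j)^2 + \alpha = (2\pi)^2(j^2 - k^2) < 0$ for $j = 1, \dots, k-1$ (contributing $2(k-1)$ dimensions, eigenvectors $\cos 2\pi jt, \sin 2\pi jt$), plus the one negative eigenvalue from the $k$-block (contributing $1$ dimension) — total $2k-1$, as it must. Crucially, \emph{the eigenvectors for the $j < k$ block are the fixed functions $\cos 2\pi jt, \sin 2\pi jt$, independent of the time-shift parameter $\sigma$}, and the remaining negative direction inside the $k$-block is spanned by a vector that rotates with $\sigma$ in a continuous (indeed smooth) way — explicitly one can take $\partial_\sigma^2(\sigma_* q_k)$, or equivalently the $\sigma$-derivative of the tangent direction, suitably normalized, which is proportional to $\psi_k(t)\cos 2\pi k\sigma + \phi_k(t)\sin 2\pi k\sigma$-type combinations and never vanishes. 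Assembling: the $2(k-1)$ fixed eigenvectors together with this one $\sigma$-dependent never-vanishing section give a global continuous frame of $\Vv^- C_k$ over $C_k \cong \SS^1$, proving triviality.

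\textbf{Main obstacle.} The genuinely delicate point is the $2\times 2$ block spanned by $\{\cos 2\pi kt, \sin 2\pi kt\}$: one must verify that, after including all the nonlocal rank-one correction terms, this block has signature $(-,0)$ — i.e. exactly one zero eigenvalue (forced, as the tangent space to $C_k$) and one strictly negative eigenvalue, and in particular that the negative eigendirection depends continuously on $\sigma$ with no vanishing. This is precisely the content of the explicit eigenvalue computation~(\ref{eq:coeffs}); modulo citing it, the argument is then just bookkeeping. An alternative, cleaner finish avoiding any explicit handling of the $k$-block: observe that $TC_k$ is trivial (it's $T\SS^1$), that $\Vv^-C_k \oplus TC_k$ is the span of $\{\cos 2\pi jt, \sin 2\pi jt : 1 \le j \le k\}$ which is a \emph{constant} (hence trivial) subbundle of the trivial bundle $C_k \times W^{1,2}$, and that a trivial bundle modulo a trivial subbundle need not be trivial in general — so this shortcut does \emph{not} work, and one really does need the continuous-section argument for the top direction. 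I would therefore present the explicit frame.
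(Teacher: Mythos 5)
There is a genuine error in your eigenvalue bookkeeping for part a). You claim the $2\times 2$ block spanned by $\{\cos 2\pi kt,\sin 2\pi kt\}$ has signature $(-,0)$ and contributes the one remaining negative direction, with eigenvector proportional to $\partial_\sigma^2(\sigma_*q_k)$. But $\partial_\sigma^2(\sigma_*q_k)=-(2\pi k)^2\sigma_*q_k$ is the radial direction $\cos 2\pi k(\cdot+\sigma)$, and the explicit computation~(\ref{eq:coeffs}) shows this direction has eigenvalue $\widehat\mu_k=12(2\pi k)^2>0$: the $k$-block has signature $(+,0)$, not $(-,0)$. (This is also what one expects geometrically: along the ray $r\mapsto r\cos 2\pi k(\cdot+\sigma)$ the functional $\Bb$ blows up as $r\to 0$ and $r\to\infty$, so the radial direction is a positive direction.) The missing $(2k-1)$-st negative dimension is instead the constant Fourier mode $n=0$, which you omitted when you wrote "eigenvalues $(2\pi j)^2+\alpha$ for $j\neq k$, each with multiplicity $2$": the $j=0$ eigenspace is one-dimensional and has eigenvalue $\mu_0=-4\pi^2k^2<0$. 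Consequently the frame you propose (the fixed vectors $\cos 2\pi jt,\sin 2\pi jt$ for $1\le j\le k-1$ plus a $\sigma$-rotating vector in the $k$-block) does not span $\Vv^-C_k$; its last member lies in the positive bundle. Your "alternative finish" paragraph repeats the same miscount when it identifies $\Vv^-C_k\oplus TC_k$ with the span of the modes $1\le j\le k$.

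Once corrected, the conclusion becomes easier than you anticipated: the negative eigenspace at every point $\sigma_*q_k\in C_k$ is literally the fixed subspace $\SPAN\{1,\cos 2\pi jt,\sin 2\pi jt: 1\le j\le k-1\}$, independent of $\sigma$, so $\Vv^-C_k$ is a constant subbundle of $C_k\times W^{1,2}$ and triviality is immediate -- no rotating section and no delicate analysis of the $k$-block is needed for the negative part. This is essentially the paper's argument, phrased there via the partially shifted Fourier basis giving a global trivialization in which all eigenvalues and eigenvector coefficients are $\sigma$-independent (the shift matters only for trivializing the positive bundle, where the $k$-block does contribute). Your treatment of part b) by citing the Morse index~(\ref{eq:ind}) is fine, though note the appendix actually re-derives that index from the same eigenvalue list, so the two parts are proved simultaneously there.
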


The proof of this lemma covers the following three pages and ends
after equation~(\ref{eq:ghgy767}).
The proof follows the computation of the Morse index
in our previous paper~\cite{Frauenfelder:2021a}.
The new aspect that the line bundle is trivial is to choose a
global trivialization of the restriction of the tangent bundle of $W^{1,2}_\times$
to $C_k$ which has the property that the eigenvalues and eigenvectors
with respect to this global trivialization are independent of the base
point. This then proves that the negative and the positive normal
bundles are both trivial.

\boldmath
\subsubsection*{The Hessian operator -- with respect to $L^2_q$}
\unboldmath

The Hessian operator $A_q$ of the Lagrange functional $\Bb$
is the derivative of the $L^2_q$ gradient at a critical point~$q$, that is
by~(\ref{eq:grad-Bb}) the derivative of the equation
\begin{equation*}
   0=\Grad\Bb(q)
   =-\ddot q+\alpha q,\qquad
   \alpha=\alpha_q :=\left(
   \frac{\norm{\dot q}^2}{\norm{q}^2}-\frac{1}{2\norm{q}^6}
   \right) \in\R
\end{equation*}
where $q\in W^{1,2}(\SS^1,\R)\setminus \{0\}$.

\begin{lemma}[{\cite{Frauenfelder:2021a}}]
The Hessian operator of $\Bb$ at a critical point $q$ is given by
\begin{equation}\label{eq:Hess-Bb}
\begin{split}
   A_q\colon W^{2,2}(\SS^1,\R)&\to L^2(\SS^1,\R)
   \\
   \xi&\mapsto 
   -\ddot\xi+\alpha \xi
   -\frac{2}{\norm{q}^2}\left(2\alpha-\frac{1}{\norm{q}^6}\right)
   \inner{q}{\xi}  q
\end{split}
\end{equation}
\end{lemma}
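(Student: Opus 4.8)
The operator $A_q$ is by definition the Fr\'echet derivative at the critical point $q$ of the $L^2_q$ gradient map $F\colon q\mapsto\Grad\Bb(q)=-\ddot q+\alpha_q q$, where $\alpha_q=\frac{\norm{\dot q}^2}{\norm{q}^2}-\frac{1}{2\norm{q}^6}$. Differentiating in a direction $\xi\in W^{2,2}$ gives at once
\[
   A_q\xi=-\ddot\xi+\alpha\xi+\bigl(D\alpha|_q\cdot\xi\bigr)\,q ,
\]
so the whole content of the lemma is the scalar computation of $D\alpha|_q\cdot\xi$, together with the simplification that it equals $-\tfrac{2}{\norm{q}^2}\bigl(2\alpha-\tfrac{1}{\norm{q}^6}\bigr)\inner{q}{\xi}$. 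The plan is therefore to carry out this one-dimensional derivative carefully.

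First I would record the elementary derivatives $D(\norm{q}^2)\xi=2\inner{q}{\xi}$, $D(\norm{q}^{-2})\xi=-2\norm{q}^{-4}\inner{q}{\xi}$, $D(\norm{q}^{-6})\xi=-6\norm{q}^{-8}\inner{q}{\xi}$, and $D(\norm{\dot q}^2)\xi=2\inner{\dot q}{\dot\xi}$. Applying the quotient rule to $\norm{\dot q}^2/\norm{q}^2$ and differentiating $-\tfrac12\norm{q}^{-6}$ yields
\[
   D\alpha|_q\cdot\xi
   =\frac{2\inner{\dot q}{\dot\xi}}{\norm{q}^2}
   -\frac{2\norm{\dot q}^2\inner{q}{\xi}}{\norm{q}^4}
   +\frac{3\inner{q}{\xi}}{\norm{q}^8}.
\]
Now comes the key step: using that $q$ is a critical point, i.e.\ $\ddot q=\alpha q$, and integrating by parts on $\SS^1$ (no boundary terms), one has $\inner{\dot q}{\dot\xi}=-\inner{\ddot q}{\xi}=-\alpha\inner{q}{\xi}$; and from $\alpha=\frac{\norm{\dot q}^2}{\norm{q}^2}-\frac{1}{2\norm{q}^6}$ one gets $\frac{\norm{\dot q}^2}{\norm{q}^4}=\frac{\alpha}{\norm{q}^2}+\frac{1}{2\norm{q}^8}$. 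Substituting both identities and collecting the coefficient of $\inner{q}{\xi}$ gives $\frac{-2\alpha}{\norm{q}^2}-\frac{2\alpha}{\norm{q}^2}-\frac{1}{\norm{q}^8}+\frac{3}{\norm{q}^8}=-\frac{4\alpha}{\norm{q}^2}+\frac{2}{\norm{q}^8}=-\frac{2}{\norm{q}^2}\bigl(2\alpha-\frac{1}{\norm{q}^6}\bigr)$, which is exactly the claimed coefficient. Plugging this back into the displayed formula for $A_q\xi$ completes the proof, and boundedness $W^{2,2}\to L^2$ is then clear since $-\partial_t^2$ maps $W^{2,2}$ to $L^2$ and the remaining terms are of order zero.

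A shortcut worth mentioning is that one may instead specialize the already-derived linearization formula~(\ref{eq:D_u-xi}) for $D_u\xi$ to the constant cylinder $u\equiv q$: since $q$ is a critical point, $\partial_s u=0$ and $-\partial_t^2 u+\alpha u=\Grad\Bb(q)=0$, so $u$ solves the heat equation, the last line of~(\ref{eq:D_u-xi}) applies, the $\partial_s$-terms drop out, and one reads off $A_q\xi$ directly. The only genuine obstacle in either route is the bookkeeping of the powers of $\norm{q}$ and remembering to invoke the critical point equation twice (once via integration by parts, once to rewrite $\norm{\dot q}^2/\norm{q}^2$); there is no conceptual difficulty.
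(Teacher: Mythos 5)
Your proposal is correct, and it is the computation the paper itself intends: the paper defines $A_q$ as the derivative of the $L^2_q$ gradient $\Grad\Bb(q)=-\ddot q+\alpha_q q$ at a critical point and defers the details to~\cite{Frauenfelder:2021a}, which is exactly the differentiation of $\alpha_q$ plus the use of $\ddot q=\alpha q$ that you carry out (and your coefficient bookkeeping checks out against the linearization formula~(\ref{eq:D_u-xi}) specialized to a constant cylinder). Nothing is missing.
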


By~(\ref{eq:q-shift}) the critical points of the functional $\Bb$ are of the form
\begin{equation}\label{eq:q-shift-2}
   \left(\sigma_* q_k\right)(t):=q_k(t+\sigma)
   =c_k\bigl(
   \cos 2\pi k\sigma \underbrace{\cos 2\pi k t}_{=:\phi_k(t)}
   -\sin 2\pi k\sigma \underbrace{\sin 2\pi k t}_{=:\psi_k(t)}
   \bigr)
\end{equation}
for $k\in\N$ and $\sigma,t\in\SS^1$ and
where $c_k=2^{-\frac{1}{6}}(\pi k)^{-\frac{1}{3}} \in(0,1)$.
From now on we fix a critical point $\sigma_* q_k$, that is
$k\in\N$ and $\sigma\in\SS^1$ are fixed from now on.
Taking two $t$ derivatives we conclude that
\[
   \frac{d^2}{dt^2}(\sigma_* q_k) =-(2\pi k)^2 \sigma_* q_k
\]
Since $\frac{d^2}{dt^2}(\sigma_* q_k) =\alpha\cdot (\sigma_* q_k)$ we obtain
\[
   \alpha=\alpha(\sigma_* q_k)
   =-(2\pi k)^2
   =-\frac{2}{c_k^6}
\]
where the last equality is~(\ref{eq:q-heat}).
The formula of the Hessian operator $A_{\sigma_* q_k}$ involves the $L^2$ norm
of $\sigma_* q_k$ and, in addition, the formula of the non-local Lagrange functional $\Bb$
involves $\norm{\sigma_*\dot q_k}^2$.
Straightforward calculation shows that
\begin{equation*}
   \Norm{\sigma_* q_k}^2
   =\Norm{q_k}^2
   \stackrel{(\ref{eq:q-heat})}{=}\frac{c_k^2}{2}
   =\frac{1}{2^\frac{4}{3} (\pi k)^\frac{2}{3}}
   ,\quad
   \Norm{\tfrac{d}{dt}(\sigma_* q_k)}^2
   =(2\pi k)^2\frac{c_k^2}{2}
   =2^\frac{2}{3} (\pi k)^\frac{4}{3}
\end{equation*}
Thus
\[
   \Bb(\sigma_* q_k)
   =2 \norm{\sigma_* q_k}^2\norm{\tfrac{d}{dt}(\sigma_* q_k)}^2+\frac{1}{\norm{\sigma_* q_k}^2}
   =2^\frac{1}{3} 3(\pi k)^\frac{2}{3}
\]
To calculate the formula of $A_{\sigma_* q_k}$ we write $\xi$ as a Fourier series
\[
   \xi
   =\xi_0+\sum_{n=1\atop n\not= k}^\infty\left(
   \xi_n\cos 2\pi nt+\xi^n\sin 2\pi nt
   \right)
   +\xi_k\cos 2\pi k(t+\sigma)+\xi^k\sin 2\pi k(t+\sigma)
\]
where we shifted the $k^{\rm th}$ modes for the reasons explained next.
That the coefficients $\xi_\cdot$ and $\xi^\cdot$ do in fact not
depend on $\sigma$ we will see right after~(\ref{eq:coeffs}).

\begin{remark}[Global trivialization]
The above (partially shifted) Fourier basis depends on the point
$\sigma_* q_k\in C_k$ of the $k^{\rm th}$ component of the critical manifold $C$.
But note that this $\SS^1$-family of Fourier bases
gives a new global trivialization of the restriction of the tangent
bundle of $W^{1,2}_\times$ to $C_k$, namely
\[
   T_{C_k}W^{1,2}_\times\simeq
   C_k\times W^{1,2}(\SS^1,\R).
\]
\end{remark}

We use the orthogonality relation
\[
   \inner{\sigma_* (\cos 2\pi k\cdot)}{\xi}
   =\inner{\cos 2\pi k(\cdot+\sigma)}{\xi}
   =\frac12\xi_k
\]
to calculate the product
\[
   \inner{\sigma_* q_k}{\xi}
   =\inner{c_k\cos 2\pi k(\cdot+\sigma)}{\xi}
   =\frac12 c_k \xi_k.
\]
Putting everything together we recover for the slightly more general
case $\sigma_* q_k$ the result we derived in~\cite{Frauenfelder:2021a}
for $q_k$, namely

\begin{lemma}[Critical values and Hessian]
The critical points of $\Bb$ are of the form~(\ref{eq:q-shift-2})
and at any such $\sigma_* q_k$ the value of $\Bb$ is
\[
   \Bb(\sigma_* q_k)
   =2^\frac{1}{3} 3(\pi k)^\frac{2}{3}
\]
and the Hessian operator~(\ref{eq:Hess-Bb}) of $\Bb$ is
\begin{equation}\label{eq:A-explicit}
\begin{split}
  A_{\sigma_* q_k}\xi
   =-\ddot \xi-(2\pi k)^2\xi
   + {\color{brown} 12(2\pi k)^2 \xi_k}\cos 2\pi k(\cdot +\sigma)
\end{split}
\end{equation}
for every $\xi\in W^{2,2}(\SS^1,\R)$.
\end{lemma}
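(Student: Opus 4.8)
The plan is to obtain both assertions by pure substitution into the general formulas, using the identities for $\sigma_*q_k$ already assembled above; there is no analytic content beyond bookkeeping. For the critical value I would insert the two norms computed above, $\norm{\sigma_*q_k}^2=c_k^2/2$ and $\norm{\tfrac{d}{dt}(\sigma_*q_k)}^2=(2\pi k)^2 c_k^2/2$, into $\Bb(q)=2\norm{q}^2\norm{\dot q}^2+\norm{q}^{-2}$ from~(\ref{eq:Bb}), getting $\Bb(\sigma_*q_k)=\tfrac12(2\pi k)^2 c_k^4+2c_k^{-2}$. The critical-point relation $(2\pi k)^2=2c_k^{-6}$ from~(\ref{eq:q-heat}) collapses the first summand to $c_k^{-2}$, so $\Bb(\sigma_*q_k)=3c_k^{-2}$, and $c_k^2=2^{-1/3}(\pi k)^{-2/3}$, again by~(\ref{eq:q-heat}), turns this into $2^{1/3}\,3\,(\pi k)^{2/3}$.

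For the Hessian I would start from the general formula~(\ref{eq:Hess-Bb}),
\[
   A_q\xi=-\ddot\xi+\alpha\xi-\frac{2}{\norm{q}^2}\Bigl(2\alpha-\frac{1}{\norm{q}^6}\Bigr)\inner{q}{\xi}\,q,
\]
specialized to $q=\sigma_*q_k$. The first two terms are immediate from $\alpha(\sigma_*q_k)=-(2\pi k)^2$, contributing $-\ddot\xi-(2\pi k)^2\xi$. For the non-local term I insert $\norm{q}^2=c_k^2/2$ (so $\norm{q}^{-6}=8c_k^{-6}$) and $\alpha=-2c_k^{-6}$, so that $2\alpha-\norm{q}^{-6}=-12c_k^{-6}$ and the scalar prefactor equals $-\tfrac{4}{c_k^2}\cdot(-12c_k^{-6})=48c_k^{-8}$; combining with $\sigma_*q_k=c_k\cos 2\pi k(\cdot+\sigma)$ and the orthogonality relation $\inner{\cos 2\pi k(\cdot+\sigma)}{\xi}=\tfrac12\xi_k$ gives $\inner{q}{\xi}\,q=\tfrac12 c_k^2\,\xi_k\cos 2\pi k(\cdot+\sigma)$, hence the non-local term is $24c_k^{-6}\,\xi_k\cos 2\pi k(\cdot+\sigma)=12(2\pi k)^2\,\xi_k\cos 2\pi k(\cdot+\sigma)$, once more by $(2\pi k)^2=2c_k^{-6}$. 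Collecting the three contributions yields exactly~(\ref{eq:A-explicit}).

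The only step that is not mere substitution — and thus the closest thing here to an obstacle — is fixing the meaning of $\xi_k$: the $k$-th mode of $\xi$ must be expanded in the $\sigma$-shifted basis $\{\cos 2\pi k(\cdot+\sigma),\sin 2\pi k(\cdot+\sigma)\}$, which spans the same plane as $\{\cos 2\pi kt,\sin 2\pi kt\}$, since only then does $\inner{\sigma_*q_k}{\xi}=\tfrac12 c_k\xi_k$ hold with a coefficient $\xi_k$ that — as confirmed at~(\ref{eq:coeffs}) — can be treated uniformly in $\sigma$. This is precisely why the $k$-th mode was shifted in the Fourier ansatz above, and everything else reduces to the arithmetic just indicated.
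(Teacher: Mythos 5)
Your computation is correct and follows essentially the same route as the paper: substitute the explicit norms $\norm{\sigma_*q_k}^2=c_k^2/2$, $\norm{\tfrac{d}{dt}(\sigma_*q_k)}^2=(2\pi k)^2c_k^2/2$ and $\alpha=-(2\pi k)^2=-2c_k^{-6}$ into~(\ref{eq:Bb}) and~(\ref{eq:Hess-Bb}), and evaluate the non-local term via the $\sigma$-shifted Fourier expansion of $\xi$ and the orthogonality relation $\inner{\sigma_*q_k}{\xi}=\tfrac12 c_k\xi_k$. The only difference is that you spell out the scalar arithmetic $-\tfrac{4}{c_k^2}\bigl(2\alpha-\norm{q}^{-6}\bigr)=48c_k^{-8}$ explicitly, which the paper leaves as ``putting everything together''; your remark on the meaning of $\xi_k$ in the shifted basis matches the paper's own reason for shifting the $k$-th mode.
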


\subsubsection*{Eigenvalues and Morse index}

Recall that $k\in\N$ and $\sigma\in\SS^1$ are fixed, that is we consider
the given critical point $\sigma_* q_k$.
For the Hessian $A_{\sigma_* q_k}$ given by~(\ref{eq:A-explicit})
we are looking for solutions of the eigenvalue problem
\[
   A_{\sigma_* q_k}\xi=\mu\xi
\]
for $\mu=\mu(\xi;k,\sigma)\in\R$ and $\xi\in W^{2,2}(\SS^1,\R)\setminus\{0\}$.
Observe that
\begin{equation*}
\begin{split}
   -\ddot\xi(t)
   &=\sum_{n=1\atop n\not= k}^\infty (2\pi n)^2
   \left(
      \xi_n\cos 2\pi nt+\xi^n\sin 2\pi nt
   \right)
   \\
   &\quad
   +(2\pi k)^2\xi_k\cos 2\pi k(t+\sigma)+(2\pi k)^2\xi^k\sin 2\pi k(t+\sigma)
   \\
   -(2\pi k)^2\xi(t)
   &=-(2\pi k)^2\xi_0-(2\pi k)^2\sum_{n=1\atop n\not= k}^\infty
   \left(
      \xi_n\cos 2\pi nt+\xi^n\sin 2\pi nt
   \right)
   \\
   &\quad
   -(2\pi k)^2\xi_k\cos 2\pi k(t+\sigma)-(2\pi k)^2\xi^k\sin 2\pi k(t+\sigma).
\end{split}
\end{equation*}
Comparing coefficients in the eigenvalue equation
$A_{\sigma_* q_k}\xi=\mu\xi$ we obtain eigenvectors (left hand side)
and eigenvalues (right hand side) as follows
\begin{equation}\label{eq:coeffs}
\begin{split}
   \cos 2\pi k (t+\sigma) \quad
   &
   \begin{cases}
      \mu\xi_k={\color{brown} 12(2\pi k)^2 \xi_k}
      &\text{ }
   \end{cases}
   \\
   \sin 2\pi k (t+\sigma) \quad
   &
   \begin{cases}
      \mu\xi^k=0
      &\text{ }
   \end{cases}
   \\
   \cos 2\pi nt \quad
   &
   \begin{cases}
      \mu\xi_n=4\pi^2\left(n^2-k^2\right) \xi_n
      &\text{, $\forall n\in\N_0\setminus\{k\}$}
   \end{cases}
   \\
   \sin 2\pi nt \quad
   &
   \begin{cases}
      \mu\xi^n=4\pi^2\left(n^2-k^2\right) \xi^n
      &\text{, $\forall n\in\N\setminus\{k\}$}
   \end{cases}
\end{split}
\end{equation}
We observe that the right hand sides do not depend on $\sigma$.
Therefore the eigenvalues $\mu$, as well as the coefficients
$\xi_\cdot$ and $\xi^\cdot$ of the $\sigma$-dependent Fourier basis,
are all $\sigma$-independent. Since the $\sigma$-dependent Fourier
basis gives rise to the global trivialization we observe that the
negative and the positive part of the normal bundle are both trivial.
This proves part a) of Lemma~\ref{le:neg-bnorm-bdl}.

For the Hessian $A_{\sigma_* q_k}$ one obtains the same eigenvalues
and multiplicities as we obtained in~\cite{Frauenfelder:2021a} in the
unshifted case $q_k$. Indeed the eigenvalues of the Hessian
$A_{\sigma_* q_k}$ are given by
\begin{equation}\label{eq:eigenvalues}
   \mu_n:=4\pi^2(n^2-k^2),\qquad n\in\N\setminus\{k\}
\end{equation}
and by
\[
   \mu_0:=-4\pi^2k^2,
   \qquad\mu_k:=0,
   \qquad\widehat\mu_k:=12(2\pi k)^2
\]
Moreover, their multiplicity (the dimension of the eigenspace) is given by
\[
   m(\mu_n)=2,\quad n\in\N\setminus\{k\},
   \qquad m(\mu_0)=m(\mu_k)=m(\widehat\mu_k)=1
\]
Observe that the eigenvalue $\widehat \mu_k\not= \mu_n$ is different
from $\mu_n$ for every $n\in\N_0$.
Indeed, suppose by contradiction that $\widehat \mu_k=\mu_n$ for some
$n\in\N_0$, that is
\begin{equation}\label{eq:ghgy767}
   48\pi^2k^2=4\pi^2(n^2-k^2)\quad\Leftrightarrow\quad
   13k^2=n^2
\end{equation}
which contradicts that $n$ is an integer.
This proves part b) of Lemma~\ref{le:neg-bnorm-bdl}.

\newpage
\bibliographystyle{alpha}
\addcontentsline{toc}{section}{References}
\bibliography{$HOME/Dropbox/0-Libraries+app-data/Bibdesk-BibFiles/library_math,$HOME/Dropbox/0-Libraries+app-data/Bibdesk-BibFiles/library_math_2020,$HOME/Dropbox/0-Libraries+app-data/Bibdesk-BibFiles/library_physics}{}

%


\end{document}